\theoremstyle{plain}
\newtheorem{theorem}{Theorem}[section]
\newtheorem{lemma}{Lemma}[section]
\theoremstyle{definition}
\theoremstyle{remark}
\newtheorem{remark}{Remark}[section]
\newcommand{\Id}{\mathbb{I}}
\newcommand{\R}{\mathbb{R}}
\newcommand{\dom}[1]{\mathrm{dom}(#1)}
\newcommand{\ran}[1]{\mathrm{ran}(#1)}
\newcommand{\gra}[1]{\mathrm{gra}(#1)}
\newcommand{\set}[1]{\left\{#1\right\}}
\newcommand{\sets}[1]{\{#1\}}
\newcommand{\norms}[1]{\Vert#1\Vert}
\newcommand{\iprods}[1]{\langle #1\rangle}
\newcommand{\argmin}{\mathrm{arg}\!\displaystyle\min}
\newcommand{\BigO}[1]{\mathcal{O}\left(#1\right)}
\newcommand{\BigOs}[1]{\mathcal{O}\big(#1\big)}
\newcommand{\SmallO}[1]{o\left(#1\right)}
\newcommand{\Eproof}{\hfill $\square$}
\newcommand{\Xc}{\mathcal{X}}
\newcommand{\Tc}{\mathcal{T}}
\newcommand{\Nc}{\mathcal{N}}
\newcommand{\Pc}{\mathcal{P}}
\newcommand{\Gc}{\mathcal{G}}
\newcommand{\Uc}{\mathcal{U}}
\newcommand{\Vc}{\mathcal{V}}
\newcommand{\zer}[1]{\mathrm{zer}(#1)}
\newcommand{\mcal}[1]{\mathcal{#1}}
\newcommand{\revised}[1]{{\color{black}#1}} 
\newcommand{\beforesec}{\vspace{-3ex}}
\newcommand{\aftersec}{\vspace{-2ex}}
\newcommand{\beforesubsec}{\vspace{-3ex}}
\newcommand{\aftersubsec}{\vspace{-2ex}}
\title{Extragradient-Type Methods with $\BigO{1/k}$ Last-Iterate Convergence Rates for Co-Hypomonotone Inclusions}
\author{Quoc Tran-Dinh\vspace{0.25ex}\\
\newline {Department of Statistics and Operations Research}\\
\newline The University of North Carolina at Chapel Hill\\
318 Hanes Hall, UNC-Chapel Hill, NC 27599-3260.\\
\newline \textit{Email:} \url{quoctd@email.unc.edu}.}
\date{Version 1: 2023/2/8. This is Version 2: 2023/10/14}
\begin{document}
%
%
\maketitle

\begin{abstract}
\normalsize
We develop two ``Nesterov's accelerated'' variants of the well-known extragradient method to approximate a solution of a co-hypomonotone inclusion constituted by the sum of two operators, where one is Lipschitz continuous and the other is possibly multivalued.
\revised{The first scheme can be viewed as an accelerated variant of Tseng's forward-backward-forward splitting (FBFS) method, while the second one is a Nesterov's accelerated variant of the ``past'' FBFS scheme, which requires only one evaluation of the Lipschitz operator and one resolvent of the multivalued mapping.}
Under appropriate conditions on the  parameters, we theoretically prove that both algorithms achieve $\BigO{1/k}$ last-iterate convergence rates on the residual norm, where $k$ is the iteration counter.
Our results can be viewed as alternatives of a recent class of Halpern-type methods for root-finding problems.
For comparison, we also provide a new convergence analysis of the two recent extra-anchored gradient-type methods for solving co-hypomonotone inclusions.

\vspace{1ex}
\noindent\textbf{Keywords:} 
Accelerated extragradient method; 
Nesterov's acceleration; 
co-hypomonotone inclusion;
Halpern's fixed-point iteration;
last-iterate convergence rate

\vspace{1ex}
\noindent\textbf{Mathematics Subject Classification (2010):} 90C25 · 90-08
\end{abstract}

\beforesec
\section{Introduction}\label{sec:intro}
\aftersec
The goal of this paper is to extend two recent ``Halpern's accelerated'' extragradient-type methods for equation root-finding problems in \cite{lee2021fast,tran2021halpern,yoon2021accelerated} to inclusion root-finding settings under a so-called ``co-hypomonotone'' assumption \cite{bauschke2020generalized}.
Unlike recent works along this line \cite{cai2022accelerated,cai2022baccelerated}, we exploit a different view from \cite{tran2022connection}, which establishes the relation between Halpern's fixed-point iteration \cite{halpern1967fixed} with ``optimal'' convergence rates  \cite{diakonikolas2020halpern,lee2021fast,lieder2021convergence,yoon2021accelerated} and Nesterov's accelerated techniques widely extended to root-finding problems, e.g.,  in  \cite{attouch2020convergence,chen2017accelerated,kim2021accelerated,mainge2021fast,Nesterov1983,Nesterov2004}.
To set our stage, we consider the following operator inclusion (also called a generalized equation \cite{Rockafellar2004}):
\begin{equation}\label{eq:coMI}
\textrm{Find $x^{\star} \in \R^p$ such that:} \quad 0 \in Fx^{\star} + Tx^{\star},
\tag{chMI}
\end{equation}
where $F$ is a single-valued operator from $\R^p\to\R^p$, and $T : \R^p \rightrightarrows 2^{\R^p}$ is a possibly multivalued operator, where $2^{\R^p}$ is the set of all subsets of $\R^p$.
Up to this point, we have not made any assumption on \eqref{eq:coMI}.
In particular, if $T = 0$, then \eqref{eq:coMI} reduces to the following equation root-finding problem:
\begin{equation}\label{eq:coME}
\textrm{Find $x^{\star} \in \R^p$ such that:} \quad Fx^{\star} = 0,
\tag{chME}
\end{equation}
of a [non]linear operator $F$.
Alternatively, if $T := \Nc_{\mcal{C}}$, the normal cone of a nonempty, closed, and convex set $\mcal{C}$, then \eqref{eq:coMI} reduces to a variational inequality problem (VIP), widely studied in the literature, see for examples  \cite{Facchinei2003,harker1990finite,Konnov2001,marcotte1991application}.
For our convenience, we denote $G := F + T$ and assume that the solution set $\zer{G} := G^{-1}(0) = \sets{x^{\star} \in \R^p : 0 \in Gx^{\star} = Fx^{\star} + Tx^{\star} }$ of \eqref{eq:coMI} is nonempty. 

The inclusion \eqref{eq:coME} though looks simple, it covers many fundamental problems in different fields by appropriately reformulating them into \eqref{eq:coMI}.
These problems include optimization (both unconstrained and constrained settings), minimax optimization, two-person game, variational inequality and its special cases, and more generally, fixed-point problems, see \cite{Bauschke2011,reginaset2008,Facchinei2003,phelps2009convex,Rockafellar2004,Rockafellar1976b,ryu2016primer}.
Alternatively, \eqref{eq:coMI} is also ubiquitous in large-scale modern machine learning and data science applications, especially in stochastic settings, see, e.g., \cite{Bottou2018,lan2020first,sra2012optimization}.
However, most existing works often focus on special cases of \eqref{eq:coME} such as optimization, convex-concave minimax, and supervised learning models as can be found in 
\cite{censor2011subgradient,Facchinei2003,Konnov2001,korpelevich1976extragradient,malitsky2015projected,malitsky2019golden,malitsky2014extragradient,Monteiro2011,Nesterov2007a,popov1980modification,solodov1999new,tseng2000modified}.
A recent trend in adversarial machine learning, reinforcement learning, online learning, and distributionally robust optimization has motivated the development of efficient and reliable methods for solving \eqref{eq:coMI}, especially in non-monotone cases \cite{arjovsky2017wasserstein,azar2017minimax,bhatia2020online,goodfellow2014generative,jabbar2021survey,levy2020large,lin2022distributionally,madry2018towards,rahimian2019distributionally}.

Solution approaches for solving \eqref{eq:coMI} often rely on a fundamental assumption: \textit{monotonicity} of $F$ and $T$, or of $G$.
Several methods generalize existing optimization algorithms such as gradient and proximal-point schemes, and exploit the splitting structure of \eqref{eq:coMI} to use individual operators defined on $F$ and $T$, respectively.
Classical methods include gradient (also known as forward or fixed-point iteration), extragradient, past-extragradient, optimistic gradient,  proximal-point,  forward-backward splitting, forward-backward-forward splitting, Douglas-Rachford splitting, forward-reflected-backward splitting, golden ratio, projective splitting methods, and their variants, see for example \cite{Bauschke2011,cevher2021reflected,Combettes2005,Davis2015,Lions1979,Facchinei2003,malitsky2015projected,malitsky2020forward,popov1980modification,tseng2000modified} for more details.
However, extensions to non-monotone settings remain limited.
\revised{Under the non-monotonicity,  \eqref{eq:coMI} is expected to cover a broader class of problems arising from modern applications in, e.g., adversarial machine learning, reinforcement learning, [distributionally] robust optimization, and game theory.
Recent works on nonmonotone \eqref{eq:coMI} can be found in \cite{bohm2022solving,cai2022accelerated,diakonikolas2021efficient,lee2021fast,luo2022last,pethick2022escaping,pennanen2002local}.}

Our goal in this paper is to develop new schemes for solving \eqref{eq:coMI} under the \textit{co-hypomonotone} structure of $F+T$ (see its definition in Section~\ref{sec:background}).
Therefore, let us first review some relevant works to our methods in this paper.
Classical splitting methods mentioned above for solving \eqref{eq:coMI} or its special cases heavily rely on the monotonicity of $F+T$ (or each of them) and the Lipschitz continuity or the co-coerciveness of $F$, see, e.g., \cite{Bauschke2011,Combettes2005,Davis2015,Lions1979,Facchinei2003,malitsky2015projected,popov1980modification,tseng2000modified}.
For the nonmonotone case, \cite{pennanen2002local} studied local convergence of proximal-point methods, while \cite{combettes2004proximal} utilized the hypomonotone and co-hypomonotone structures. 
Some other works considered minimax problems, special cases of \eqref{eq:coMI}, by imposing weak monotonicity or other appropriate regularity conditions (e.g., two-sided Polyak-{\L}ojasiewicz condition, or interaction dominance) as in \cite{grimmer2023landscape,lin2018solving,yang2020global}.
Recently, convergence rates of extragradient methods for a class of nonmonotone (i.e. weak Minty) variational inequalities were studied in \cite{diakonikolas2021efficient}, and then in \cite{pethick2022escaping} for \eqref{eq:coMI}.
These methods only achieve $\BigOs{1/\sqrt{k}}$ - ``best-iterate" convergence rates, where $k$ is the iteration counter.
The work \cite{luo2022last} proved $\BigOs{1/\sqrt{k}}$ convergence rates on the last iterate for these methods to solve the special case \eqref{eq:coME} of \eqref{eq:coMI} under the co-hypomonotonicity of $F$.
A very recent survey on extragradient-type methods can be found in \cite{tran2023sublinear}.
However, hitherto, establishing the last-iterate convergence rates of classical extragradient-type methods for solving \eqref{eq:coMI} under the nonmonotonicity (e.g., co-hypomonotonicity) remains largely open.

Alternative to classical or ``non-accelerated" algorithms for solving \eqref{eq:coMI}, there is a line of research that develops accelerated methods to solve \eqref{eq:coMI} under both monotone and co-hypomonotone structures.
Early attempts relied on dual averaging techniques such as \cite{Cong2012,Nemirovskii2004,Nesterov2007a}, which require the monotonicity or special assumptions.
Attouch et al \cite{attouch2019convergence} proposed accelerated proximal-point methods to solve \eqref{eq:coMI} under the maximal monotonicity of $G$.
Several following up works have been focussing on Nesterov's acceleration-type methods (i.e. exploiting momentum and possibly correction terms) for solving \eqref{eq:coMI} under monotone assumptions such as \cite{attouch2020convergence,bot2022fast,bot2022bfast,kim2021accelerated,mainge2021fast,mainge2021accelerated}.
Each of these methods can be viewed as a discretization of  an appropriate dynamical system (i.e. a given ordinary differential equation). 
Recently, accelerated methods based on Halpern's fixed-point iteration \cite{halpern1967fixed} have attracted a great attention.
This method was originally developed to approximate a fixed-point of a non-expansive operator, but can be used to solve monotone inclusions of the form \eqref{eq:coMI}.
Perhaps, Sabach and Shtern \cite{sabach2017first} and then Lieder \cite{lieder2021convergence} were the first who proved $\BigO{1/k}$ convergence rates for Halpern's fixed-point iteration by appropriately choosing its parameters.
This method was further exploited to solve variational inequalities in \cite{diakonikolas2020halpern}.
Yoon and Ryu extended Halpern's fixed-point iteration idea to extragradient methods in \cite{yoon2021accelerated} to solve \eqref{eq:coME} that remove the co-coerciveness assumption on $F$.
Lee and Kim \cite{lee2021fast} further advanced \cite{yoon2021accelerated} to develop similar algorithms for solving \eqref{eq:coME} but under the co-hypomonotonicity (a type of non-monotonicity), while do not scarify the $\BigO{1/k}$-convergence rates.
The work \cite{tran2021halpern} exploited the idea of \cite{yoon2021accelerated} and developed a Halpern-type variant for the past-extragradient method in \cite{popov1980modification}, which requires only one operator evaluation per iteration.
Recently, \cite{cai2022accelerated,cai2022baccelerated} extended  \cite{yoon2021accelerated} and \cite{tran2021halpern} to variational inequalities and \eqref{eq:coMI} under either the monotonicity or the co-hypomonotonicity assumption.
Both Halpern's fixed-point iteration and Nesterov's accelerated schemes for root-finding problems are actually related to each other.
Their relation has recently been studied in \cite{tran2022connection} for several schemes.
Other related works along this line  can be found, e.g., in \cite{bohm2022solving,bot2022fast,bot2022bfast,gidel2018variational,gorbunov2022extragradient,gorbunov2022last,lin2018solving,tran2022accelerated}.

\vspace{1ex}
\textbf{Contribution.}
\revised{In this paper, we propose new ``Nesterov's accelerated variants'' of the forward-backward-forward splitting (FBFS) method  \cite{tseng2000modified} and its past FBFS variant.
The form of our methods is as simple as the one in \cite{tran2022connection} for equation root-finding problems.
Our first algorithm can be viewed as an \textit{accelerated FBFS method}, but abbreviated by AEG as it is rooted from the extragradient method in \cite{korpelevich1976extragradient}.
This algorithm achieves a $\BigO{1/k}$ last-iterate convergence rate on the residual norm of \eqref{eq:coMI}.
However, it requires  two evaluations of $F$ and one evaluation of $J_{\eta T}$ (the resolvent of $\eta T$ for some $\eta > 0$) as often seen in Tseng's FBFS method \cite{tseng2000modified}.
Our second variant is an \textit{accelerated past FBFS scheme} (abbreviated by APEG), which adopts the idea from Popov's method \cite{popov1980modification} or optimistic gradient-type algorithms \cite{daskalakis2018limit}  but using a different past operator value.
This algorithm has almost the same per-iteration complexity as  of the forward-reflected-backward splitting method \cite{malitsky2020forward} or the reflected forward-backward splitting scheme \cite{cevher2021reflected,malitsky2015projected}, but achieves  a $\BigO{1/k}$ last-iterate convergence rate on the residual norm of \eqref{eq:coMI} as in our first algorithm -- AEG.
However, as a compensation, APEG has a smaller stepsize and stricter condition, leading to a worse convergence upper bound than AEG.
Our rates are the same order as  the ones in recent works  \cite{cai2022accelerated,cai2022baccelerated}, but we take a different approach  and develop different algorithms with new convergence analysis.
We believe that this paper is the first showing a possibility to develop Nesterov's accelerated-type methods for nonmonotone (in particular, nonconvex optimization) problems by using different convergence criteria. 
In the second part of this paper, we conduct a new convergence analysis for the two extra-anchored gradient-type methods for solving \eqref{eq:coMI} proposed in \cite{cai2022accelerated,cai2022baccelerated}.
Our analysis can be viewed as a natural extension of the techniques for \eqref{eq:coME} in \cite{lee2021fast,tran2021halpern,yoon2021accelerated} and is different from \cite{cai2022accelerated,cai2022baccelerated}.
Moreover, our second scheme is still different from the one in \cite{cai2022baccelerated}.}

\vspace{1ex}
\textbf{Paper organization.}
The rest of this paper is organized as follows.
In Section~\ref{sec:background} we briefly review some related concepts to \eqref{eq:coMI} such as Lipschitz continuity, monotonicity, and co-hypomonotonicity, and recall some preliminary results used in this paper.
Section~\ref{sec:AEG} develops a Nesterov's accelerated extragradient method to solve \eqref{eq:coMI} and analyzes its convergence rate.
Section~\ref{sec:APEG} derives a Nesterov's accelerated variant of the past FBFS method to solve \eqref{eq:coMI} and analyzes its convergence rate.
For completeness and comparison, in Section~\ref{apdx:Halpern_AcEG}, we present an alternative convergence analysis for the Halpern-type extragradient methods (also called extra-anchored gradient-type methods) in  \cite{cai2022accelerated,cai2022baccelerated}.
However, our second variant is significantly different from the one in \cite{cai2022baccelerated}  due to the choice of a past value.

\beforesec
\section{Background and Preliminary Results}\label{sec:background}
\aftersec
We first review some background on monotone operators and related concepts.
Then, we recall Tseng's classical  FBFS method and its past FBFS variant as the baselines of our development.

\beforesubsec
\subsection{Monotone-type operators and related concepts}
\aftersubsec
In this paper, we work with a finite dimensional space $\R^p$ equipped with the standard inner product $\iprods{\cdot, \cdot}$ and Euclidean norm $\norms{\cdot}$.
For a single-valued or multivalued mapping $G : \R^p \rightrightarrows 2^{\R^p}$, $\dom{G} = \set{x \in\R^p : Gx \not= \emptyset}$ denotes its domain, $\ran{G} := \set{ u \in \R^p : u \in Gx, ~x\in\dom{G}}$ denotes its range, $\gra{G} = \set{(x, y) \in \R^p\times \R^p : y \in Gx}$ denotes its graph, where $2^{\R^p}$ is the set of all subsets of $\R^p$.
The inverse of $G$ is defined by $G^{-1}y := \sets{x \in \R^p : y \in Gx}$.

\vspace{1ex}
\noindent(a)~\textbf{Monotonicity.}
For a multivalued mapping $G : \R^p \rightrightarrows 2^{\R^p}$, we say that $G$ is $\mu$-monotone for some $\mu \in \R$ if $\iprods{u - v, x - y} \geq \mu\norms{x - y}^2$ for all $(x, u), (y, v) \in \gra{G}$.
If $\mu = 0$, then $G$ is called monotone, i.e. $\iprods{u - v, x - y} \geq 0$ for all $(x, u), (y, v) \in \gra{G}$. 
If $\mu > 0$, then $G$ is called strongly monotone with a strong monotonicity parameter $\mu$.
If $\mu < 0$, then we say that $G$ is $\vert\mu\vert$-hypomonotone or $\vert\mu\vert$-weakly monotone.
If $G$ is single-valued, then the above condition reduces to $\iprods{Gx - Gy, x - y} \geq \mu\norms{x - y}^2$ for all $x, y\in\dom{G}$, which defines the corresponding concepts for the single-valued mapping $G$.
We say that $G$ is $\mu$-maximally monotone if $\gra{G}$ is not properly contained in the graph of any other $\mu$-monotone operator.

\vspace{1ex}
\noindent(b)~\textbf{Lipschitz continuity.}
A single-valued operator $G$ is called $L$-Lipschitz continuous if $\norms{Gx - Gy} \leq L\norms{x - y}$ for all $x, y\in\dom{G}$, where $L \geq 0$ is a Lipschitz constant. 
If $L = 1$, then we say that $G$ is nonexpansive, while if $L \in [0, 1)$, then we say that $G$ is $L$-contractive, and $L$ is its contraction factor.
\revised{If $G$ is $L$-Lipschitz continuous, then by the Cauchy-Schwarz inequality, we have $\iprods{Gx - Gy, x - y} \geq -\norms{Gx - Gy}\norms{x - y} \geq - L\norms{x - y}^2$, showing that $G$ is also $L$-hypomonotone.}

\vspace{1ex}
\noindent(c)~\textbf{Co-hypomonotonicity and co-coerciveness.}
\revised{A mapping $G$ is said to be $\rho$-co-hypomonotone for $\rho > 0$ if $\iprods{u - v, x - y} \geq -\rho\norms{u - v}^2$ for all $(x, u), (y, v) \in \gra{G}$.
The co-hypomonotonicity is also referred to as $-\rho$-co-monotonicity with a parameter $\rho$, see, e.g., \cite{bauschke2020generalized,combettes2004proximal}.
Clearly, a co-hypomonotone operator is not necessarily monotone.
If $\rho = 0$, then $G$ is just monotone, while if $\rho < 0$, then $G$ is $\beta$-co-coercive with the parameter $\beta := -\rho$.
If $\beta = 1$, then we say that $G$ is firmly nonexpansive.
If $G$ is $\beta$-co-coercive, then it is also monotone and $\frac{1}{\beta}$-Lipschitz continuous (by the Cauchy-Schwarz inequality), but the reversed statement is not true in general.

The co-hypomonotonicity concept was extended to a so-called \textit{semimonotonicity} in \cite{evens2023convergence,otero2011regularity}.
It is stronger than the weak Minty solution condition, e.g., in \cite{diakonikolas2021efficient}, which only requires to hold at a given solution $x^{\star}$, i.e. $\iprods{u, x - x^{\star}} \geq -\rho \norms{u}^2$ for all $(x, u) \in \gra{G}$.
It is obvious that $G$ is $\rho$-co-hypomonotone if and only if $G^{-1}$ is $\rho$-hypomonotone, i.e. $\iprods{u - v, x - y} \geq -\rho\norms{u-v}^2$ for all $(u, x), (v, y) \in \gra{G^{-1}}$.

The co-hymonotonicity is also related to a so-called ``interaction dominance condition'' studied in \cite{grimmer2023landscape}.
Proposition 4.6 in \cite{evens2023convergence} states that if a minimax objective function satisfies an interaction dominance condition, then its gradient is co-hypomonotone.
This structure presents in various minimax models  \cite{grimmer2023landscape}.

Another related concept  is the \textit{strong metric subregularity} (see \cite{cibulka2018strong,Rockafellar2004}) widely studied in variational analysis, though it is often defined locally.
More specifically, a mapping $G$ is said to be \textit{strongly metrically subregular} at $x$ for $u$ when $u \in Gx$ and there exists  $\kappa > 0$ along with a neighborhood $\Xc$ of $x$ and $\Uc$ of $u$ such that
\begin{equation*} 
d(y, x) \leq \kappa \hat{d}(u, Gy \cap\Uc), \quad \forall y \in \Xc,
\end{equation*}
where $d$ and $\hat{d}$ are given metrics.
Note that the [strong] metric subregularity is classical and relates to many other common concepts in variational analysis such as sharp minimizer, Aubin's property, calmness, and error bound condition. 

More concretely, if $G$ is  strongly metric subregular at any $x$ and additionally $\mu$-hypomonotone (i.e. $\iprods{u - v, x - y} \geq -\mu\norms{x - y}^2$ for all $(x, u), (y, v) \in \gra{G}$.
If the metrics $d$ and $\hat{d}$ are the standard Euclidean distances, i.e. $d(y, x) = \norms{y-x}$ and $\hat{d}(u, v) = \norms{u - v}$, then we get $\iprods{u - v, x - y} \geq -\mu\norms{x - y}^2 \geq -\mu\kappa^2\norms{u - v}^2$, showing that $G$ is also $\mu\kappa^2$-co-hypomonotone.

}

\vspace{1ex}
\noindent\textbf{Resolvent operator.}
Given a possibly multivalued mapping $T$, the operator $$J_Tu := \set{x \in \R^p : u \in x + Tx}$$ is called the resolvent of $T$, often denoted by $J_Tu = (\Id + T)^{-1}u$, where $\Id$ is the identity mapping.
Clearly, evaluating $J_T$ requires solving an inclusion $0 \in y - x + Ty$ in $y$ for given $x$.
If $T$ is monotone, then $J_T$ is singled-valued, and if $T$ is maximally monotone, then $J_T$ is singled-valued and $\dom{J_T} = \R^p$.
If $T$ is monotone, then $J_T$ is also firmly nonexpansive \cite[Proposition 23.10]{Bauschke2011}, and hence nonexpansive. 
Moreover $0 \in Tx$ if and only if $x$ is a fixed-point of $J_T$.
\revised{If $T$ is $\mu$-hypomonotone, then for $\Id + \eta T$ is also $(1-\eta\mu)$-strongly monotone, provided that $\eta\mu < 1$.
In this case, $J_{\eta T}$ remains single-valued and $(1-\eta\mu)$-co-coercive.}

\beforesubsec
\subsection{Preliminary results}\label{subsec:preliminary}
\aftersubsec
Let us recall the following preliminary results, which will be used in the sequel.

\vspace{1ex}
\noindent\textbf{Solution characterization.}
Let us characterize solutions of \eqref{eq:coMI}.
Assume that for some $\eta > 0$, the resolvent $J_{\eta T} = (\Id + \eta T)^{-1}$ associated with \eqref{eq:coMI} exists and single-valued.
Let us define the following residual operator associated with \eqref{eq:coMI}:
\begin{equation}\label{eq:coMI_residual}
\mcal{G}_{\eta}x := \tfrac{1}{\eta}\left( x - J_{\eta T}(x - \eta Fx) \right).
\end{equation}
Then, $x^{\star} \in \zer{F+T}$ if and only if $\Gc_{\eta}x^{\star} = 0$.
Our goal is to approximate $x^{\star}$ by $x^k$ generated from a given algorithm after $k$ iterations such that $ \norms{\Gc_{\eta}x^k}  \leq \varepsilon$ for a given tolerance $\varepsilon > 0$.
We also characterize the convergence rate of $ \norms{\Gc_{\eta}x^k}$.

Another way of characterizing approximate solutions of \eqref{eq:coMI} is to use a residual $r^k := Fx^k + \xi^k$ of \eqref{eq:coMI} at $x^k$ for some $\xi^k \in Tx^k$.
Since $T$ is multivalued, there may exist different residuals $r^k$ at $x^k$.
Clearly, since $r^k \in Fx^k + Tx^k$, if $\norms{r^k} = 0$, then $x^k$ is an exact solution of \eqref{eq:coMI}.
Therefore, if $\norms{r^k} \leq \varepsilon$ for some tolerance $\varepsilon > 0$, then $x^k$ can be viewed as an $\varepsilon$-approximate solution of \eqref{eq:coMI}.
We call $r^k$ a residual of \eqref{eq:coMI} at $x^k$.
In this paper, we will upper bound the norm $\norms{r^k}$ of the residual $r^k$ at $x^k$ to characterize an approximate solution $x^k$.
If $x^k$ is the last iterate computed by the algorithm, then we refer to the convergence rate on $\norms{r^k}$ as the last-iterate convergence rate.
If $x^k = x^{k_b}$ with $k_b := \argmin\sets{ \norms{r^i} : 0 \leq i \leq k}$, then we call the convergence rate on $\norms{r^{k_b}}$ the best-iterate convergence rate.

\vspace{1ex}
\noindent\textbf{A brief review of extragradient-type methods.}
The classical extragradient method \cite{diakonikolas2021efficient,Facchinei2003,korpelevich1976extragradient} is often applied to solve monotone variational inequalities and their special cases.
To solve the inclusion \eqref{eq:coMI}, its modification was proposed by Tseng in \cite{tseng2000modified}, which is known as a forward-backward-forward splitting (FBFS) method.
This scheme is described as follows:
Given $y^0 \in \R^p$, we update 
\begin{equation}\label{eq:EG}
\arraycolsep=0.2em
\left\{\begin{array}{lcl}
x^k &:= & J_{ \eta T}(y^k - \eta Fy^k), \vspace{1ex}\\
y^{k+1} & := & x^k - (\hat{\eta} Fx^k - \eta Fy^k),
\end{array}\right.
\tag{FBFS}
\end{equation}
where $\eta > 0$ and $\hat{\eta} > 0$ are given parameter.
Very often, we choose $\hat{\eta} = \eta \in \left(0, \frac{1}{L}\right)$, where $L$ is the Lipschitz constant of $F$.
However, we can also choose $\hat{\eta} = \frac{\eta}{\beta}$ for some $0 < \beta \leq 1$ as in \cite{diakonikolas2021efficient,luo2022last}, which is called the extragradient plus (EG+) scheme.

\revised{Note that if we replace $Fy^k$ by $Fx^{k-1}$ in the extragradient method from \cite{korpelevich1976extragradient}, then we obtain Popov's past-extragradient method in \cite{popov1980modification}.
Similarly, if we replace $Fy^k$ in \eqref{eq:EG} by $Fx^{k-1}$, then  we obtain the following past FBFS scheme:
\begin{equation}\label{eq:PEG}
\arraycolsep=0.2em
\left\{\begin{array}{lcl}
x^k &:= & J_{ \eta T}(y^k - \eta Fx^{k-1}), \vspace{1ex}\\
y^{k+1} & := & x^k - (\hat{\eta} Fx^k - \eta Fx^{k-1}),
\end{array}\right.
\tag{PFBFS}
\end{equation}
where $x^{-1} := y^0$ is given, $\hat{\eta} := \frac{\eta}{\beta}$ for some $\beta \in (0, 1]$, and $\eta \in \left(1, \frac{1}{3L}\right]$ is a given parameter.}
This method requires only one evaluation $Fx^k$ of $F$ and one resolvent $J_{\hat{\eta}T}$.
If we eliminate $y^k$ and choose $\hat{\eta} = \eta$, then we obtain $\hat{x}^{k+1} = J_{\eta T}(\hat{x}^k - \eta( 2F\hat{x}^k - F\hat{x}^{k-1}))$ with $\hat{x}^k := x^{k-1}$, which is exactly the forward-reflected-backward splitting scheme in \cite{malitsky2020forward}.
While the asymptotic convergence and convergence rates of both \eqref{eq:EG} and \eqref{eq:PEG} under the monotonicity of $F$ are classical, the best-iterate convergence rates of these methods under the co-hypomonotonicity of $F$ can be found, e.g., in \cite{luo2022last}.
Note that \eqref{eq:PEG} is different from the reflected forward-backward splitting method $x^{k+1} := J_{\eta T}(x^k - \eta F(2x^k - x^{k-1}))$ in \cite{cevher2021reflected} (rooted from the projected reflected gradient method in \cite{malitsky2015projected}) or the golden ratio method in \cite{malitsky2019golden} for solving monotone variational inequality, a special case of \eqref{eq:coMI}.

\beforesec
\section{Nesterov's Accelerated FBFS Method for Solving \eqref{eq:coMI}}\label{sec:AEG}
\aftersec
Our goal in this section is to develop an accelerated variant of \eqref{eq:EG} to solve \eqref{eq:coMI} using Nesterov's acceleration techniques \cite{Nesterov2004}.
Motivated from recent works \cite{cai2022accelerated,cai2022baccelerated,diakonikolas2020halpern,lee2021fast,tran2021halpern,yoon2021accelerated} on Halpern's fixed-point iteration with ``optimal'' convergence rates on the norm of residual of \eqref{eq:coMI}, in this section we exploit the idea from \cite{tran2022connection} to develop an alternative ``Nesterov's accelerated'' variant of the FBFS method \eqref{eq:EG} to solve \eqref{eq:coMI}.

\beforesubsec
\subsection{Algorithmic derivation}\label{subsec:EAG_derive}
\aftersubsec
Our starting point is Nesterov's acceleration interpretation of \eqref{eq:EG} to solve \eqref{eq:coME} from \cite{tran2022connection}.
Here, we extend this variant to solve the inclusion \eqref{eq:coMI}, which is presented as follows:
Starting from an initial point $x^0 \in \R^p$, set $y^0 := x^0$ and $z^0 := x^0$, we update
\begin{equation}\label{eq:AcEG_v0}
\arraycolsep=0.2em
\left\{\begin{array}{lcl}
z^{k+1} &:= & x^k - \gamma (Fx^k + \xi^k), \vspace{1ex}\\
y^{k+1} &:= & z^{k+1} + \theta_k(z^{k+1} - z^k) + \nu_k(y^k - z^{k+1}), \vspace{1ex}\\
x^{k+1} & := & y^{k+1} - \eta(Fy^{k+1} + \xi^{k+1}) + \hat{\eta}_{k+1}(Fx^k + \xi^k), 
\end{array}\right.
\end{equation}
where $\gamma$, $\eta$, $\hat{\eta}_{k+1}$, $\theta_k$ and $\nu_k$ are given parameters, which will be determined later, and $\xi^k \in Tx^k$ is an arbitrary element of $Tx^k$.
Clearly, if $T = 0$, then \eqref{eq:AcEG_v0} exactly reduces to the one in \cite{tran2022connection}.

Since $\xi^{k+1} \in Tx^{k+1}$ appears on the right-hand side of the last line of \eqref{eq:AcEG_v0}, by conceptually using the resolvent $J_{\eta T} = (\Id + \eta T)^{-1}$ of $\eta T$, we can transform this scheme into an equivalent form that is more convenient for implementation.
First, we switch the first and last lines of \eqref{eq:AcEG_v0} and shift the index from $k+1$ to $k$ as 
\begin{equation*}
x^k := y^k - \eta(Fy^k + \xi^k) + \hat{\eta}_k(Fx^{k-1} + \xi^{k-1}).
\end{equation*}
Next, for simplicity of presentation, we introduce $w^k := Fx^k + \xi^k$.
Then, from the last expression of $x^k$, we have $ y^k - \eta Fy^k + \hat{\eta}_k w^{k-1} = x^k + \eta \xi^k \in (\Id + \eta T)x^k$.
Hence, we obtain $x^k = J_{\eta T}\big( y^k - \eta Fy^k + \hat{\eta}_k w^{k-1} \big)$ provided that $J_{\eta T}$ is single-valued.
In addition, we also have $x^k = y^k - \eta(Fy^k - Fx^k + w^k) + \hat{\eta}_kw^{k-1}$, leading to $w^k = \frac{1}{\eta}(y^k - x^k + \hat{\eta}_kw^{k-1}) + Fx^k - Fy^k$.
Finally, putting these derivations together, we obtain the following equivalent interpretation of \eqref{eq:AcEG_v0}:
\begin{equation}\label{eq:AcEG}
\arraycolsep=0.2em
\left\{\begin{array}{lcl}
\revised{ x^k } & \revised{ :=  } & \revised{ J_{\eta T}\big(y^k - \eta Fy^k + \hat{\eta}_k w^{k-1} \big), } \vspace{1ex}\\
w^k & := &   \frac{1}{\eta}(y^k - x^k + \hat{\eta}_kw^{k-1}) + Fx^k - Fy^k, \vspace{1ex}\\
z^{k+1} &:= & x^k - \gamma w^k, \vspace{1ex}\\
y^{k+1} &:= & z^{k+1} + \theta_k(z^{k+1} - z^k)  +  \nu_k(y^k - z^{k+1}), 
\end{array}\right.
\tag{AEG}
\end{equation}
where we choose $w^{-1} := \frac{\eta}{\hat{\eta}_0}w^0 = \frac{\eta}{\hat{\eta}_0}(Fx^0 + \xi^0)$ for $\xi^0 \in Tx^0$, and set $z^0 = y^0 := x^0$ for a given initial point $x^0 \in \R^p$.
We call this scheme an \textit{accelerated FBFS method} to distinguish it from the extra-anchored gradient method in \cite{yoon2021accelerated} (see also Section~\ref{apdx:Halpern_AcEG}).

If $J_{\eta T}$ is computable, then \eqref{eq:AcEG} is implementable. 
Nevertheless, we have not yet obviously seen  the connection between \eqref{eq:AcEG} and \eqref{eq:EG}.
Now, let us eliminate $z^k$ to obtain
\begin{equation}\label{eq:AcEG_v3}
\arraycolsep=0.2em
\left\{\begin{array}{lcl}
\revised{ x^k } & \revised{ := } & \revised{ J_{\eta T}\big(y^k - \eta Fy^k { \ + \ \hat{\eta}_k w^{k-1}} \big), } \vspace{1ex}\\
w^k & := &   \frac{1}{\eta}(y^k - x^k + \hat{\eta}_kw^{k-1}) + Fx^k - Fy^k, \vspace{1ex}\\
\revised{y^{k+1}} & := & \revised{x^k + \theta_k(x^k - x^{k-1}) - \beta_k w^k + \sigma_k w^{k-1} - \eta\nu_k(Fx^k - Fy^k),}
\end{array}\right.
\end{equation}
where \revised{$\beta_k := \gamma(1 + \theta_k - \nu_k) - \eta\nu_k$ and $\sigma_k := \gamma\theta_k - \nu_k\hat{\eta}_k$.}
We can view \eqref{eq:AcEG_v3} as an accelerated variant of \eqref{eq:EG}, where a momentum term ${\theta_k(x^k - x^{k-1})}$ and other simple correction terms are added to \eqref{eq:EG} as observed in \cite{mainge2021accelerated}.

Overall, at each iteration $k$, \eqref{eq:AcEG} requires one evaluation of the resolvent $J_{\eta T}$ of $\eta T$ and two evaluations $Fx^k$ and $Fy^k$ of $F$.
This per-iteration complexity is essentially the same as \eqref{eq:EG}.
When $T = 0$, as indicated in \cite{tran2022connection}, \eqref{eq:AcEG_v0} can be derived from the Halpern-type schemes developed in \cite{lee2021fast,yoon2021accelerated}.
The convergence rates of these Halpern-type schemes were proven in \cite{lee2021fast,yoon2021accelerated} for both the monotone and co-hypomonotone cases when $T = 0$.
When $T\neq 0$, \cite{cai2022accelerated,cai2022baccelerated} recently proposed corresponding Halpern-type schemes and analyzed their convergence rates for both the monotone and co-hypomonotone cases.

\revised{
\begin{remark}\label{re:general_resolvent}
We can replace the standard resolvent $J_{\eta T} := (\Id + \eta T)^{-1}$ by its generalization $J_{\eta T} := (D + \eta T)^{-1}\circ D$, where $D$ is a symmetric and positive semidefinite operator.
In addition, our methods and their convergence results on $\norms{Fx^k + v^k}$ remain valid if  $J_{\eta T}$ is a multivalued mapping and satisfies $\ran{J_{\eta T}} \subseteq \dom{F} = \R^p$ and $\dom{J_{\eta T}} = \R^p$.
These conditions guarantee the well-definedness of \eqref{eq:AcEG}.
\end{remark}
}

\beforesubsec
\subsection{Convergence analysis}\label{subsec:EAG_convergence}
\aftersubsec
To establish the convergence of \eqref{eq:AcEG}, we use the following potential or Lyapunov function from \cite{tran2022connection}:
\begin{equation}\label{eq:AcEG_potential_func}
\Pc_k := a_k\norms{w^{k-1}}^2 + b_k\iprods{w^{k-1}, z^k - y^k} + \norms{z^k + t_k(y^k - z^k) - x^{\star}}^2,
\end{equation}
where $a_k > 0$, $b_k > 0$, and $t_k > 0$ are given parameters.
Note that \cite{tran2022connection} did not provide convergence analysis for \eqref{eq:AcEG} in the co-hypomonotone case even when $T = 0$.
It only established the convergence rate of a Nesterov's accelerated variant derived from the extra-anchored gradient method in \cite{yoon2021accelerated}.
Here, we reuse this potential function to analyze the convergence of \eqref{eq:AcEG} for solving \eqref{eq:coMI} in the co-hypomonotone setting, which obviously covers the monotone case.

Let us first lower bound the potential function $\Pc_k$ as in the following lemma.

\begin{lemma}\label{le:P_lowerbound}
Let $\sets{(x^k, y^k, z^k, w^k)}$ be generated by \eqref{eq:AcEG} and  $\Pc_k$ be defined by \eqref{eq:AcEG_potential_func}.
Then, $w^k \in Fx^k + Tx^k$.
If $J_{\eta T}$ is single-valued and nonexpansive for any $\eta > 0$, then for $\Gc_{\eta}$ defined by \eqref{eq:coMI_residual}, we have
\begin{equation}\label{eq:G_vs_w}
\norms{\Gc_{\eta}x^k} \leq \norms{w^k}.
\end{equation}
If $F+T$ is $\rho$-co-hypomonotone, then
\begin{equation}\label{eq:P_lowerbound}
\Pc_k \geq  \norms{z^k + t_k(y^k - z^k) - \tfrac{b_k}{2t_k}w^{k-1} -  x^{\star} }^2  +  \left( a_k - \tfrac{b_k^2}{4t_k^2} - \tfrac{(\gamma + \rho) b_k}{t_k}  \right) \norms{w^{k-1}}^2.
\end{equation}
Consequently, if $a_k - \frac{b_k^2}{4t_k^2} - \frac{(\gamma + \rho) b_k}{t_k} \geq 0$ and $J_{\eta T}$ is single-valued and nonexpansive, then we obtain
\begin{equation}\label{eq:P_lowerbound2}
\Pc_k \geq   \left( a_k - \frac{b_k^2}{4t_k^2} - \frac{(\gamma + \rho) b_k}{t_k}  \right) \norms{\Gc_{\eta}x^{k-1}}^2.
\end{equation}
\end{lemma}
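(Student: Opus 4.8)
The plan is to prove the four assertions in the order stated, since each one feeds the next. First I would establish $w^k \in Fx^k + Tx^k$ directly from the resolvent definition. The first line of \eqref{eq:AcEG} says $x^k = J_{\eta T}(y^k - \eta Fy^k + \hat{\eta}_k w^{k-1})$, which by definition of $J_{\eta T} = (\Id + \eta T)^{-1}$ means $y^k - \eta Fy^k + \hat{\eta}_k w^{k-1} \in x^k + \eta Tx^k$. Hence $\xi^k := \frac{1}{\eta}(y^k - x^k + \hat{\eta}_k w^{k-1}) - Fy^k \in Tx^k$, and comparing with the second line of \eqref{eq:AcEG} gives $w^k = Fx^k + \xi^k \in Fx^k + Tx^k$.

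Next, for the residual bound \eqref{eq:G_vs_w}, I would exploit that $\xi^k \in Tx^k$ forces $x^k = J_{\eta T}(x^k + \eta\xi^k)$ when $J_{\eta T}$ is single-valued, while $\Gc_{\eta}x^k$ involves $J_{\eta T}(x^k - \eta Fx^k)$ by \eqref{eq:coMI_residual}. Writing $\eta\,\Gc_{\eta}x^k = J_{\eta T}(x^k + \eta\xi^k) - J_{\eta T}(x^k - \eta Fx^k)$ and invoking nonexpansiveness of $J_{\eta T}$ bounds the right-hand side by $\norms{(x^k + \eta\xi^k) - (x^k - \eta Fx^k)} = \eta\norms{Fx^k + \xi^k} = \eta\norms{w^k}$; dividing by $\eta$ yields \eqref{eq:G_vs_w}.

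For the potential lower bound \eqref{eq:P_lowerbound} I would abbreviate $u := z^k + t_k(y^k - z^k) - x^{\star}$ and complete the square in the cross term via the identity $\norms{u}^2 = \norms{u - \tfrac{b_k}{2t_k}w^{k-1}}^2 + \tfrac{b_k}{t_k}\iprods{u, w^{k-1}} - \tfrac{b_k^2}{4t_k^2}\norms{w^{k-1}}^2$. Substituting this into $\Pc_k$ and expanding $\tfrac{b_k}{t_k}\iprods{u, w^{k-1}} = \tfrac{b_k}{t_k}\iprods{z^k - x^{\star}, w^{k-1}} + b_k\iprods{y^k - z^k, w^{k-1}}$, the term $b_k\iprods{y^k - z^k, w^{k-1}}$ exactly cancels the original cross term $b_k\iprods{w^{k-1}, z^k - y^k}$ of $\Pc_k$, leaving only $\tfrac{b_k}{t_k}\iprods{z^k - x^{\star}, w^{k-1}}$. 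Using $z^k = x^{k-1} - \gamma w^{k-1}$ (from eliminating the third line of \eqref{eq:AcEG}) splits this into $\tfrac{b_k}{t_k}\iprods{x^{k-1} - x^{\star}, w^{k-1}} - \tfrac{\gamma b_k}{t_k}\norms{w^{k-1}}^2$.

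The main obstacle, and the only place nonmonotonicity enters, is the final estimate on $\iprods{x^{k-1} - x^{\star}, w^{k-1}}$. Here I would invoke the first assertion at index $k-1$, namely $w^{k-1} \in Gx^{k-1}$ with $G = F + T$, together with $0 \in Gx^{\star}$; then $\rho$-co-hypomonotonicity of $G$ gives $\iprods{w^{k-1}, x^{k-1} - x^{\star}} \geq -\rho\norms{w^{k-1}}^2$. Because $b_k/t_k > 0$, multiplying preserves the inequality and lower bounds the remaining inner-product contribution by $-\tfrac{(\gamma + \rho)b_k}{t_k}\norms{w^{k-1}}^2$, which assembles into \eqref{eq:P_lowerbound}. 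Finally, \eqref{eq:P_lowerbound2} follows at once: under the sign condition $a_k - \tfrac{b_k^2}{4t_k^2} - \tfrac{(\gamma+\rho)b_k}{t_k} \geq 0$ I would discard the nonnegative squared-norm term in \eqref{eq:P_lowerbound} and then replace $\norms{w^{k-1}}^2$ by the smaller $\norms{\Gc_{\eta}x^{k-1}}^2$, using \eqref{eq:G_vs_w} at index $k-1$ (valid since the retained coefficient is nonnegative).
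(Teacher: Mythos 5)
Your proposal is correct and follows essentially the same route as the paper's proof: the same resolvent manipulation to show $w^k \in Fx^k + Tx^k$, the same nonexpansiveness comparison of $J_{\eta T}(x^k + \eta\xi^k)$ with $J_{\eta T}(x^k - \eta Fx^k)$ for \eqref{eq:G_vs_w}, and the same completion of the square combined with $z^k = x^{k-1} - \gamma w^{k-1}$ and co-hypomonotonicity at $(x^{k-1}, x^{\star})$ for \eqref{eq:P_lowerbound}. The only difference is that you spell out the square-completion identity and the cancellation of the cross term explicitly, which the paper leaves implicit.
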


\begin{proof}
From the first line of \eqref{eq:AcEG}, we have $y^k - x^k + \hat{\eta}_k w^{k-1} - \eta Fy^k  \in \eta Tx^k$.
Rearranging this expression, inserting $Fx^k$ to both sides, and using $w^k$ from the second line of \eqref{eq:AcEG}, we have $w^k = \frac{1}{\eta}(y^k - x^k + \hat{\eta}_kw^{k-1}) - (Fy^k - Fx^k)  \in Fx^k + Tx^k$.
This proves that $w^k \in Fx^k + Tx^k$.

From $w^k \in Fx^k + Tx^k$, we get $x^k - \eta Fx^k + \eta w^k \in x^k + \eta Tx^k$, leading to $x^k = J_{\eta T}(x^k - \eta Fx^k + \eta w^k)$ due to the single-valued assumption of $J_{\eta T}$.
Using this relation, the definition of $\Gc_{\eta}$ from \eqref{eq:coMI_residual}, and the nonexpansiveness of $J_{\eta T}$ from our assumption, we have
\begin{equation*}
\eta\norms{\Gc_{\eta}x^k} = \norms{x^k - J_{\eta T}(x^k - \eta Fx^k)} = \norms{J_{\eta T}(x^k - \eta Fx^k + \eta w^k) - J_{\eta T}(x^k - \eta Fx^k)} \leq \eta\norms{w^k}. 
\end{equation*}
This clearly implies that $\norms{G_{\eta}x^k} \leq \norms{w^k}$ as in \eqref{eq:G_vs_w}.

Next, utilizing $z^k = x^{k-1} - \gamma w^{k-1}$ from the third line of \eqref{eq:AcEG}, and the $\rho$-co-hypomonotonicity of $F+T$ with $\iprods{w^{k-1}, x^{k-1} - x^{\star}} \geq -\rho\norms{w^{k-1}}^2$, we can show that
\begin{equation*} 
\arraycolsep=0.2em
\begin{array}{lcl}
\Pc_k  & =   & \norms{z^k + t_k(y^k - z^k) - x^{\star} - \frac{b_k}{2t_k}w^{k-1}}^2 + \left( a_k - \frac{b_k^2}{4t_k^2} -  \frac{\gamma b_k}{t_k} \right) \norms{w^{k-1}}^2 \vspace{1ex}\\
&& + {~} \frac{b_k}{t_k}\iprods{w^{k-1}, x^{k-1} - x^{\star}} \vspace{1ex}\\  
& \geq & \norms{z^k + t_k(y^k - z^k) - \frac{b_k}{2t_k}w^{k-1} -  x^{\star} }^2  +  \left( a_k - \frac{b_k^2}{4t_k^2} - \frac{(\gamma + \rho) b_k}{t_k}  \right) \norms{w^{k-1}}^2.
\end{array}
\end{equation*}
This is exactly \eqref{eq:P_lowerbound}.
Combining \eqref{eq:P_lowerbound} and $\norms{G_{\eta}x^k} \leq \norms{w^k}$, we obtain \eqref{eq:P_lowerbound2}.
\Eproof
\end{proof}

Our first main result is the following theorem showing the convergence rate of the \textit{accelerated FBFS} scheme \eqref{eq:AcEG}.

\begin{theorem}\label{th:AcEG_convergence}
Assume that $F + T$ in \eqref{eq:coMI} is $\rho$-co-hypomonotone, $F$ is $L$-Lipschitz continuous such that $2L\rho <  1$, and $x^{\star} \in \zer{F+T}$ is an arbitrary solution of \eqref{eq:coMI}.
Let $\gamma > 0$ be such that $L(2\rho + \gamma) \leq 1$ $($e.g., $\gamma := \frac{1}{L} - 2\rho > 0$$)$ and $\sets{(x^k, y^k, z^k)}$ be generated by \eqref{eq:AcEG} using the following update rules:
\begin{equation}\label{eq:AcEG_para_choice}
t_k := k + 2, \quad \eta := \gamma + 2\rho,  \quad \hat{\eta}_k = \frac{(t_k-1)\eta}{t_k},  \quad  \theta_k := \frac{t_k-1}{t_{k+1}}, \quad \text{and} \quad \nu_k := \frac{t_k}{t_{k+1}}.
\end{equation}
Then, the following bound holds:
\begin{equation}\label{eq:AcEG_convergence}
\norms{Fx^k + \xi^k}^2 \leq \frac{4\norms{x^0 - x^{\star}}^2 + 8\gamma(3\gamma+2\rho)\norms{Fx^0 + \xi^0}^2}{ \gamma^2 (k+2)^2}, \quad \text{where}\quad \xi^k \in Tx^k.
\end{equation}
If $J_{\eta T}$ is nonexpansive, then for $\Gc_{\eta}$ defined by \eqref{eq:coMI_residual}, we also have
\begin{equation}\label{eq:AcEG_convergence_b}
\norms{\Gc_{\eta}x^k}^2 \leq \frac{4\norms{x^0 - x^{\star}}^2 + 8\gamma(3\gamma+2\rho)\norms{Fx^0 + \xi^0}^2 }{ \gamma^2 (k+2)^2}.
\end{equation}
\end{theorem}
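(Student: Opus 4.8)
The plan is to run the standard Lyapunov (potential-function) argument built on $\Pc_k$ from \eqref{eq:AcEG_potential_func}, with three ingredients: (i) the lower bound already furnished by Lemma~\ref{le:P_lowerbound}; (ii) a one-step non-increase $\Pc_{k+1}\le\Pc_k$; and (iii) an explicit evaluation of the initial value $\Pc_0$. To make these fit together I would commit to the still-free parameters $b_k := \gamma t_k$ and $a_k := \tfrac{\gamma^2(t_k-1)^2}{4} + \tfrac{5\gamma^2}{4} + \gamma\rho$. A direct substitution (using $\hat{\eta}_k=(t_k-1)\eta/t_k$ and $\eta=\gamma+2\rho$) then collapses the coefficient appearing in \eqref{eq:P_lowerbound2} to the clean value $a_k - \tfrac{b_k^2}{4t_k^2} - \tfrac{(\gamma+\rho)b_k}{t_k} = \tfrac{\gamma^2(t_k-1)^2}{4} =: c_k \ge 0$, so that Lemma~\ref{le:P_lowerbound} gives $\Pc_{k+1}\ge c_{k+1}\norms{w^k}^2$ with $c_{k+1}=\tfrac{\gamma^2(k+2)^2}{4}$.

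A first simplification I would record is an exact invariance of the ``anchor'' term. Writing $u^k := z^k + t_k(y^k-z^k) - x^{\star}$, the update for $y^{k+1}$ together with $t_{k+1}\theta_k = t_k - 1$ and $t_{k+1}\nu_k = t_k$ forces the coefficient of $z^{k+1}$ to cancel, giving $u^{k+1} = (1-t_k)z^k + t_k y^k - x^{\star} = u^k$ for every $k$, hence $u^k \equiv u^0 = x^0 - x^{\star}$. Consequently the third term of $\Pc_k$ is the constant $\norms{x^0-x^{\star}}^2$, and the entire one-step decrease must come from the $w$-dependent part $a_k\norms{w^{k-1}}^2 + b_k\iprods{w^{k-1}, z^k-y^k}$.

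The crux is the descent step $\Pc_{k+1}\le\Pc_k$. Here I would expand $\Pc_{k+1}-\Pc_k$, discard the (now constant) anchor contribution, and substitute the defining identities $\eta w^k = y^k - x^k + \hat{\eta}_k w^{k-1} + \eta(Fx^k - Fy^k)$ and $z^{k+1} = x^k - \gamma w^k$ to re-express every inner product in terms of $w^k$, $w^{k-1}$, and $Fx^k-Fy^k$. The only non-algebraic input is the Lipschitz bound $\norms{Fx^k - Fy^k}\le L\norms{x^k-y^k}$, which I would apply after completing squares; the stepsize calibration $\eta = \gamma + 2\rho$ together with the standing assumption $L(2\rho+\gamma) = L\eta \le 1$ is precisely what guarantees that the remaining quadratic form in $(w^k, w^{k-1}, Fx^k-Fy^k)$ is nonpositive. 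I expect this verification --- arranging the cross terms so that the $\norms{Fx^k-Fy^k}^2$ slack is absorbed into the $L\eta\le 1$ margin --- to be the main obstacle and the most calculation-heavy part of the argument.

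With descent established, I would chain $c_{k+1}\norms{w^k}^2 \le \Pc_{k+1}\le\Pc_k\le\cdots\le\Pc_0$. Since $z^0=y^0=x^0$ annihilates the cross term and $w^{-1}=\tfrac{\eta}{\hat{\eta}_0}w^0 = 2w^0$ (because $t_0=2$ yields $\hat{\eta}_0=\eta/2$), one computes $\Pc_0 = 4a_0\norms{w^0}^2 + \norms{x^0-x^{\star}}^2 = 2\gamma(3\gamma+2\rho)\norms{w^0}^2 + \norms{x^0-x^{\star}}^2$. Dividing by $c_{k+1}=\tfrac{\gamma^2(k+2)^2}{4}$ and recalling that $w^k = Fx^k+\xi^k \in Fx^k+Tx^k$ for a suitable $\xi^k\in Tx^k$ yields \eqref{eq:AcEG_convergence}; finally, the nonexpansiveness of $J_{\eta T}$ invoked through $\norms{\Gc_{\eta}x^k}\le\norms{w^k}$ in \eqref{eq:G_vs_w} upgrades this to \eqref{eq:AcEG_convergence_b}.
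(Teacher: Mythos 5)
Your overall architecture --- the Lyapunov function \eqref{eq:AcEG_potential_func}, the lower bound from Lemma~\ref{le:P_lowerbound}, a one-step descent $\Pc_{k+1}\le\Pc_k$, and an explicit evaluation of $\Pc_0$ --- is exactly the paper's, and your observation that the anchor $u^k := z^k + t_k(y^k - z^k) - x^{\star}$ is \emph{exactly} invariant under the $y$-update (because $t_{k+1}\theta_k = t_k-1$ and $t_{k+1}\nu_k = t_k$) is correct and is a cleaner way of seeing why the first three conditions in \eqref{eq:AcEG_para_cond1} annihilate all the $x^{\star}$-dependent terms. Your values of $\Pc_0$ and of the lower-bound coefficient $\tfrac{\gamma^2(t_k-1)^2}{4}$ also check out. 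The gap is in the descent step, and it is caused by your choice $b_k := \gamma t_k$. Even after the anchor term is discarded, the difference $\Pc_k - \Pc_{k+1}$ contains the term $(b_{k+1}\theta_k - b_k)\big[\iprods{w^k, z^{k+1}-z^k} + \iprods{w^{k-1}, y^k - z^{k+1}}\big]$ (cf.\ \eqref{eq:AcEG_proof3}). The paper imposes $b_{k+1}\theta_k = b_k$ precisely to kill it, which forces $b_k \propto (t_k-1)t_k$, i.e.\ quadratic growth. With $b_k = \gamma t_k$ you get $b_{k+1}\theta_k - b_k = -\gamma \neq 0$, and the surviving piece $-\gamma\iprods{w^k, z^{k+1}-z^k}$ contains $\iprods{w^k, x^k - x^{k-1}}$, which --- contrary to your plan --- cannot be re-expressed in terms of $w^k$, $w^{k-1}$, $Fx^k - Fy^k$ alone, and cannot be sign-controlled: co-hypomonotonicity only bounds $\iprods{w^k - w^{k-1}, x^k - x^{k-1}}$, not the unpaired product.

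There is a second, independent obstruction: the one-step descent must hold as an inequality between quadratic forms in $(w^{k-1}, w^k, \hat{w}^k)$, so the coefficient of $\norms{w^k}^2$ must be nonnegative on its own (one cannot trade it against the $\norms{w^{k-1}}^2$ coefficient, since $\norms{w^k}$ is not a priori controlled by $\norms{w^{k-1}}$). The only positive multiples of $\norms{w^k}^2$ available to absorb $-a_{k+1}\norms{w^k}^2$ come from $b_{k+1}\iprods{\nu_k w^k - \theta_k w^{k-1}, y^k - z^{k+1}}$ with $y^k - z^{k+1} = \gamma w^k + \eta\hat{w}^k - \hat{\eta}_k w^{k-1}$, hence are of order $\gamma b_{k+1}$; in the paper this yields the requirement $a_{k+1} \le \tfrac{b_k(\gamma t_k + \gamma + \eta)}{2(t_k-1)}$ (third condition of \eqref{eq:AcEG_proof10_cond}). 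Your $a_{k+1} \sim \tfrac{\gamma^2 k^2}{4}$ is quadratic while $\gamma b_{k+1} \sim \gamma^2 k$ is linear, so this coefficient is strictly negative for all large $k$ and $\Pc_{k+1}\le\Pc_k$ fails. The repair is to keep your $a_0$ and your invariance argument but take $b_k := \tfrac{b_0(k+1)(k+2)}{2}$ with $b_0 = 2\gamma$ and $a_k := \tfrac{b_k(\gamma t_k + \eta)}{2t_k}$, which satisfy $b_{k+1}\theta_k = b_k$, reproduce the same lower-bound coefficient $\tfrac{\gamma^2(k+1)^2}{4}$ and the same $\Pc_0$, and make the descent verifiable exactly as in the paper.
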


Theorem~\ref{th:AcEG_convergence} provides an alternative view of the ``accelerated'' FBFS method based on Nesterov's accelerated interpretation \cite{tran2022connection}.
Our condition $2L\rho < 1$ is the same as in \cite{cai2022accelerated,lee2021fast} for the Halpern-type variants of \eqref{eq:EG}.
Note that the estimate \eqref{eq:AcEG_convergence} only requires the $\rho$-co-hypomonotonicity of $F + T$ and the $L$-Lipschitz continuity of $F$, while \eqref{eq:AcEG_convergence_b} additionally requires the nonexpansiveness of $J_{\eta T}$.
If $\rho = 0$, i.e. $F+T$ is monotone, then our condition on the stepsize $\gamma$ becomes $L\gamma < 1$, which is used in the clasical extragradient and FBFS methods, e.g., in \cite{korpelevich1976extragradient}.
If $\rho < 0$, i.e. $F+T$ is $-\rho$-co-coercive, then our bounds remain valid as long as $-1 < L(\gamma + 2\rho) \leq 1$ (or $\max\set{-\frac{1}{L} - 2\rho, 0} < \gamma \leq \frac{1}{L} - 2\rho)$.
However, we may expect a linear convergence rate instead of a sublinear rate as in Theorem~\ref{th:AcEG_convergence}.
This requires to adapt our analysis by using the techniques, e.g., in \cite{partkryu2022}, which is beyond the scope of this paper.

Note that we can choose different $\eta$ such that $L\eta \leq 1$, but not necessary to be $\eta = \gamma + 2\rho$ as in \eqref{eq:AcEG_para_choice}.
The update rule \eqref{eq:AcEG_para_choice} in Theorem~\ref{th:AcEG_convergence} only provides one choice of $\eta$ and it makes our analysis simpler.
However, other choices of $\eta$ and $\gamma$ can also work as long as they satisfy the conditions in our analysis below.

\begin{proof}[The proof of Theorem~\ref{th:AcEG_convergence}]
First, by inserting $(t_k-1)z^{k+1} - (t_k-1)z^{k+1}$, we can easily expand $\norms{z^k + t_k(y^k - z^k) - x^{\star}}^2$ as follows:
\begin{equation*} 
\arraycolsep=0.2em
\begin{array}{lcl}
\norms{z^k + t_k(y^k - z^k) - x^{\star}}^2 &= & \norms{z^{k+1} - x^{\star} + (t_k - 1)(z^{k+1} - z^k) + t_k(y^k - z^{k+1})}^2 \vspace{1ex}\\
&= & \norms{z^{k+1} - x^{\star}}^2 + (t_k - 1)^2\norms{z^{k+1} - z^k}^2 + t_k^2\norms{y^k - z^{k+1}}^2 \vspace{1ex}\\
&& + {~} 2(t_k -1)\iprods{z^{k+1} - z^k, z^{k+1} - x^{\star}} \vspace{1ex}\\
&& + {~} 2t_k\iprods{y^k - z^{k+1}, z^{k+1} - x^{\star}} \vspace{1ex}\\
&& + {~} 2(t_k-1)t_k\iprods{y^k - z^{k+1}, z^{k+1} - z^k}.
\end{array} 
\end{equation*}
Next, from the fourth line of \eqref{eq:AcEG}, we have $y^{k+1} - z^{k+1} = \theta_k(z^{k+1} - z^k) + \nu_k(y^k - z^{k+1})$.
Using this line, we can show that
\begin{equation*} 
\arraycolsep=0.2em
\begin{array}{lcl}
\Tc_{[1]} &:= & \norms{z^{k+1} + t_{k+1}(y^{k+1} - z^{k+1}) -  x^{\star}}^2  \vspace{1ex}\\
&= & \norms{z^{k+1} - x^{\star} + t_{k+1}[\theta_k(z^{k+1} - z^k) + \nu_k(y^k - z^{k+1})]}^2 \vspace{1ex}\\
&= & \norms{z^{k+1} - x^{\star}}^2 + t_{k+1}^2\theta_k^2\norms{z^{k+1} - z^k}^2 +  t_{k+1}^2\nu_k^2\norms{y^k  - z^{k+1} }^2 \vspace{1ex}\\
&& + {~} 2t_{k+1}\theta_k\iprods{z^{k+1} - z^k, z^{k+1} - x^{\star} } + 2t_{k+1}\nu_k\iprods{y^k - z^{k+1}, z^{k+1} - x^{\star}} \vspace{1ex}\\
&& + {~} 2t_{k+1}^2\nu_k\theta_k\iprods{y^k - z^{k+1}, z^{k+1} - z^k}.
\end{array} 
\end{equation*}
Combining the last two expressions, we can easily get
\begin{equation*} 
\arraycolsep=0.2em
\begin{array}{lcl}
\Tc_{[2]}  &:= &   \norms{z^k + t_k(y^k - z^k) - x^{\star}}^2 - \norms{z^{k+1} + t_{k+1}(y^{k+1} - z^{k+1}) -  x^{\star}}^2 \vspace{1ex}\\
&= & \left[ (t_k-1)^2 - t_{k+1}^2\theta_k^2 \right] \norms{z^{k+1} - z^k }^2 + (t_k^2  - \nu_k^2t_{k+1}^2)\norms{y^k - z^{k+1} }^2 \vspace{1ex}\\
&& + {~} 2(t_k - 1 - t_{k+1}\theta_k)\iprods{z^{k+1} - z^k, z^{k+1} - x^{\star}} \vspace{1ex}\\
&& + {~} 2(t_k - t_{k+1}\nu_k)\iprods{y^k - z^{k+1}, z^{k+1} - x^{\star}} \vspace{1ex}\\
&& + {~} 2 \left[ t_k(t_k-1) - \theta_k\nu_kt_{k+1}^2 \right] \iprods{ y^k - z^{k+1}, z^{k+1} - z^k }.
\end{array} 
\end{equation*}
Using the above expression $\Tc_{[2]}$, the definition of $\Pc_k$ from \eqref{eq:AcEG_potential_func}, and the fourth line of  \eqref{eq:AcEG} as $z^{k+1} - y^{k+1} = -\theta_k(z^{k+1} - z^k) - \nu_k(y^k - z^{k+1})$,  we can further derive
\begin{equation}\label{eq:AcEG_proof3} 
\hspace{-1ex}
\arraycolsep=0.2em
\begin{array}{lcl}
\Pc_k - \Pc_{k+1} &= &  a_k\norms{w^{k-1}}^2 - a_{k+1}\norms{w^k}^2 \vspace{1ex}\\
&& + {~} \left[ (t_k-1)^2 - t_{k+1}^2\theta_k^2 \right] \norms{z^{k+1} - z^k }^2 + (t_k^2  - \nu_k^2t_{k+1}^2)\norms{y^k - z^{k+1} }^2 \vspace{1ex}\\
&& + {~} b_k\iprods{w^k - w^{k-1}, z^{k+1} - z^k}  + b_{k+1}\iprods{\nu_kw^k - \theta_kw^{k-1}, y^k - z^{k+1}} \vspace{1ex}\\
&& + {~} \left( b_{k+1}\theta_k - b_k\right) \big[ \iprods{w^k, z^{k+1} - z^k}  + \iprods{w^{k-1}, y^k - z^{k+1}} \big]  \vspace{1ex}\\
&& + {~} 2(t_k - 1 - t_{k+1}\theta_k)\iprods{z^{k+1} - z^k, z^{k+1} - x^{\star}} \vspace{1ex}\\
&& + {~} 2(t_k - t_{k+1}\nu_k)\iprods{y^k - z^{k+1}, z^{k+1} - x^{\star}} \vspace{1ex}\\
&& + {~} 2 \left[ t_k(t_k-1) - t_{k+1}^2\theta_k\nu_k \right] \iprods{ y^k - z^{k+1}, z^{k+1} - z^k}.
\end{array} 
\hspace{-4ex}
\end{equation}
Let us choose positive parameters $t_k$, $\nu_k$, $\theta_k$, and $b_k$ such that
\begin{equation}\label{eq:AcEG_para_cond1}
\arraycolsep=0.2em
\begin{array}{lclclcl}
t_k - t_{k+1}\nu_k & = &  0, \ \ &  & t_k(t_k-1) - \nu_k\theta_kt_{k+1}^2 & = & 0, \vspace{1ex}\\
t_k - 1  - t_{k+1} \theta_k & =&  0, \ &  \text{and} \ &  b_{k+1}\theta_k - b_k & = &  0.
\end{array}
\end{equation}
These conditions lead to the update of $\theta_k$, $\nu_k$, and $b_k$ as $\theta_k = \frac{t_k - 1}{t_{k+1}}$, $\nu_k = \frac{t_k}{t_{k+1}}$, and $b_{k+1} = \frac{b_k}{\theta_k} = \frac{b_kt_{k+1}}{t_k-1}$, respectively.
Clearly, these updates of $\nu_k$ and $\theta_k$ are exactly shown in \eqref{eq:AcEG_para_choice}.

Now, under the choice of parameters as in \eqref{eq:AcEG_para_cond1}, \eqref{eq:AcEG_proof3} reduces to
\begin{equation}\label{eq:AcEG_proof5} 
\arraycolsep=0.2em
\begin{array}{lcl}
\Pc_k - \Pc_{k+1} &= &  a_k\norms{w^{k-1}}^2 - a_{k+1}\norms{w^k}^2 + b_k\iprods{w^k - w^{k-1}, z^{k+1} - z^k} \vspace{1ex}\\
&& + {~} b_{k+1} \iprods{\nu_k w^k - \theta_kw^{k-1}, y^k - z^{k+1} }.
\end{array} 
\end{equation}
As shown in Lemma~\ref{le:P_lowerbound}, we have $w^k \in Fx^k + Tx^k$.
By the $\rho$-co-hypomonotonicity of $F + T$ and $z^{k+1} = x^k - \gamma w^k$ from the third line of \eqref{eq:AcEG}, we have $\iprods{w^k - w^{k-1}, z^{k+1} - z^k} = \iprods{w^k - w^{k-1}, x^k - x^{k-1}} - \gamma\norms{w^k - w^{k-1}}^2 \geq -(\rho + \gamma)\norms{w^k - w^{k-1}}^2$.
Therefore, we get
\begin{equation}\label{eq:AcEG_proof7} 
\arraycolsep=0.2em
\begin{array}{lcl}
\iprods{w^k - w^{k-1}, z^{k+1} - z^k} &\geq & - (\gamma + \rho)\left[ \norms{w^k}^2 + \norms{w^{k-1}}^2 - 2\iprods{w^k, w^{k-1}} \right].
\end{array}
\end{equation}
For our convenience, let us denote $\hat{w}^k := w^k + Fy^k -  Fx^k$.
Then, it is obvious that $Fy^k - Fx^k = \hat{w}^k - w^k$.
From the second line of \eqref{eq:AcEG} and $Fy^k - Fx^k = \hat{w}^k - w^k$, we have $x^k - y^k = \hat{\eta}_k w^{k-1} - \eta \hat{w}^k$.
Substituting this into the third line of \eqref{eq:AcEG}, we get $y^k - z^{k+1} = \gamma w^k + \eta \hat{w}^k -  \hat{\eta}_k w^{k-1}$.
Using this expression, we can show that
\begin{equation}\label{eq:AcEG_proof6} 
\arraycolsep=0.2em
\begin{array}{lcl}
\iprods{\nu_k w^k - \theta_kw^{k-1}, y^k - z^{k+1} } &= & \iprods{\nu_k w^k - \theta_kw^{k-1}, \gamma w^k + \eta \hat{w}^k - \hat{\eta}_kw^{k-1}} \vspace{1ex}\\
&= &  \nu_k\gamma \norms{w^k}^2 +  \theta_k\hat{\eta}_k \norms{w^{k-1}}^2  + \nu_k\eta \iprods{w^k, \hat{w}^k} \vspace{1ex}\\
&& - {~} \eta \theta_k\iprods{w^{k-1}, \hat{w}^k} - (\nu_k\hat{\eta}_k  + \gamma \theta_k) \iprods{w^{k-1}, w^k}.
\end{array}
\end{equation}
Substituting \eqref{eq:AcEG_proof7} and \eqref{eq:AcEG_proof6} into \eqref{eq:AcEG_proof5}, and utilizing $b_k = b_{k+1}\theta_k$, we can show that
\begin{equation}\label{eq:AcEG_proof9} 
\hspace{-1ex}
\arraycolsep=0.2em
\begin{array}{lcl}
\Pc_k - \Pc_{k+1} &\geq &  \left[  a_k + b_k\left(\hat{\eta}_k - \gamma - \rho \right) \right]  \norms{w^{k-1}}^2  +  \left[ b_k\left(\frac{\gamma \nu_k}{\theta_k} - \gamma - \rho\right)  - a_{k+1} \right] \norms{w^k}^2 \vspace{1ex}\\
&& + {~} \eta b_k\left(\frac{\nu_k}{\theta_k} - 1\right)\iprods{w^k, \hat{w}^k}  +  \eta b_k  \iprods{w^k - w^{k-1}, \hat{w}^k } \vspace{1ex}\\
&& - {~}  b_k \left( \frac{\hat{\eta}_k\nu_k}{\theta_k}  - \gamma - 2\rho \right)  \iprods{w^{k-1}, w^k}.
\end{array} 
\hspace{-6ex}
\end{equation}
Now, using the Lipschitz continuity of $F$ and the relations $Fy^k - Fx^k = \hat{w}^k - w^k$ and $x^k - y^k = \hat{\eta}_k w^{k-1} - \eta \hat{w}^k$ as above, we have $\norms{\hat{w}^k - w^k}^2 = \norms{Fy^k - Fx^k}^2 \leq L^2\norms{x^k - y^k}^2 = L^2\norms{\eta\hat{w}^k - \hat{\eta}_k w^{k-1}}^2$.
This inequality leads to
\begin{equation*} 
\arraycolsep=0.2em
\begin{array}{lcl}
0 & \geq & \norms{w^k}^2 + (1 - L^2\eta^2)\norms{\hat{w}^k}^2 - 2\left(1 - L^2\eta\hat{\eta}_k \right)\iprods{w^k, \hat{w}^k} \vspace{1ex}\\
&&- {~}  2L^2\eta\hat{\eta}_k \iprods{w^k - w^{k-1}, \hat{w}^k } - L^2\hat{\eta}_k^2 \norms{w^{k-1}}^2. 
\end{array}
\end{equation*}
Multiplying this inequality by $\frac{b_k}{2L^2\hat{\eta}_k}$ and adding the result to \eqref{eq:AcEG_proof9}, we get
\begin{equation}\label{eq:AcEG_proof10} 
\arraycolsep=0.2em
\begin{array}{lcl}
\Pc_k - \Pc_{k+1} &\geq &  \left[ a_k + \frac{b_k}{2} \left( \hat{\eta}_k - 2\gamma - 2\rho \right) \right] \norms{w^{k-1}}^2 + \frac{b_k(1 - L^2\eta^2)}{2L^2\hat{\eta}_k}  \norms{\hat{w}^k}^2 \vspace{1ex}\\
&& + {~} \left[ b_k\left(\frac{\gamma \nu_k}{\theta_k} + \frac{1}{2L^2\hat{\eta}_k} - \gamma - \rho\right) - a_{k+1} \right] \norms{w^k}^2  \vspace{1ex}\\
&& + {~} b_k \left( \frac{\eta\nu_k}{\theta_k}  -  \frac{1}{L^2\hat{\eta}_k}  \right) \iprods{w^k, \hat{w}^k} -   b_k( \frac{\hat{\eta}_k\nu_k}{\theta_k} - 2\rho - \gamma )  \iprods{w^{k-1}, w^k}.
\end{array} 
\end{equation}
Let us choose $\eta := \gamma + 2\rho$ and $\hat{\eta}_k := \frac{\eta\theta_k}{\nu_k} = \frac{\eta(t_k-1)}{t_k}$ as in \eqref{eq:AcEG_para_choice}.
Then, using $\nu_k = \frac{t_k}{t_{k+1}}$, $\theta_k = \frac{t_k-1}{t_{k+1}}$, $b_{k+1} = \frac{b_kt_{k+1}}{t_k-1}$, and $\eta := \gamma + 2\rho$, we can simplify \eqref{eq:AcEG_proof10}  as
\begin{equation}\label{eq:AcEG_proof10_v1} 
\arraycolsep=0.2em
\begin{array}{lcl}
\Pc_k - \Pc_{k+1} &\geq &  \left( a_k - \frac{b_k(\gamma t_k + \eta)}{2t_k} \right) \norms{w^{k-1}}^2 + \left[ \frac{b_k\left(  \gamma t_k + \gamma + \eta \right) }{2(t_k-1)}  - a_{k+1} \right] \norms{w^k}^2 \vspace{1ex}\\
&& + {~}  \frac{b_k(1 - L^2\eta^2)t_k}{2L^2\eta(t_k-1)} \norms{w^k - \hat{w}^k}^2. 
\end{array} 
\end{equation}
Next, we impose the following conditions:
\begin{equation}\label{eq:AcEG_proof10_cond} 
1 - L^2\eta^2  \geq  0, \quad  a_k - \frac{b_k(\gamma t_k + \eta)}{2t_k}  \geq  0, \quad \text{and}\quad  \frac{b_k\left(  \gamma t_k + \gamma + \eta \right) }{2(t_k-1)}  - a_{k+1}  \geq  0.
\end{equation}
Then, under the conditions of \eqref{eq:AcEG_proof10_cond}, \eqref{eq:AcEG_proof10_v1} reduces to $\Pc_k - \Pc_{k+1} \geq 0$ for all $k\geq 0$.

We need to  show that with the update rules as in \eqref{eq:AcEG_para_choice},  three conditions of \eqref{eq:AcEG_proof10_cond} hold.
Indeed, the first condition of \eqref{eq:AcEG_proof10_cond} is equivalent to $L\eta = L(\gamma + 2\rho) \leq 1$.
If $2L\rho < 1$, then we can always choose $0 < \gamma \leq \frac{1}{L} - 2\rho$ such that $1 - L^2\eta^2 \geq 0$.
This is our first condition in Theorem~\ref{eq:AcEG_convergence}.

Now, let us choose $a_k := \frac{b_k(\gamma t_k + \eta)}{2t_k}$. 
Then, the second condition of \eqref{eq:AcEG_proof10_cond} automatically holds.
The third condition of \eqref{eq:AcEG_proof10_cond} becomes  $a_{k+1} \leq \frac{ b_{k+1}( \gamma t_{k + 1} + \eta ) }{2 t_{k+1}}$ due to $b_{k+1} = \frac{b_kt_{k+1}}{t_k-1}$ and $t_k = k + 2$.
By the choice of $a_k$, this condition holds with equality.
For $t_k := k+2$, we can easily obtain $b_k := \frac{b_0(k+1)(k+2)}{2}$ for some $b_0 > 0$, and consequently, $a_k =  \frac{b_k(\gamma t_k + \eta)}{2t_k} = \frac{b_0(k+1)(\gamma k + 3\gamma + 2\rho )}{4}$.

Finally, since $\Pc_{k+1} \leq \Pc_k$, by induction we get 
\begin{equation*}
\arraycolsep=0.2em
\begin{array}{lcl}
\Pc_k & \leq & \Pc_0 = a_0\norms{w^{-1}}^2 + b_0\iprods{w^{-1}, z^0 - y^0} + \norms{z^0 + t_0(y^0 - z^0) - x^{\star}}^2 \vspace{1ex}\\
& = & 4a_0\norms{w^0}^2 + \norms{x^0 - x^{\star}}^2  =  b_0(3\gamma + 2\rho)\norms{w^0}^2 + \norms{x^0 - x^{\star}}^2,
\end{array}
\end{equation*}
due to $z^0 = y^0 = x^0$ and $w^{-1} = \frac{\eta}{\hat{\eta}_0}w^0 = 2w^0$.
Combining this expression and \eqref{eq:P_lowerbound}, and then using the expressions of $a_k$ and $b_k$, we get
\begin{equation*} 
\arraycolsep=0.2em
\begin{array}{lcl}
\Pc_0 &= & \frac{b_0(3\gamma + 2\rho)}{2}\norms{w^0}^2 + \norms{x^0 - x^{\star}}^2 \vspace{1ex}\\
&\geq & \Pc_k \geq \left( a_k - \frac{b_k^2}{4t_k^2} - \frac{(\gamma + \rho) b_k}{t_k}  \right) \norms{w^{k-1}}^2 = \frac{ b_0(4\gamma - b_0)(k+1)^2}{16}  \norms{w^{k-1}}^2.
\end{array} 
\end{equation*}
If we choose $b_0 := 2\gamma$, then the last estimate leads to $ \norms{w^k}^2 \leq \frac{4\norms{x^0 - x^{\star}}^2}{\gamma^2(k+2)^2} + \frac{8(3\gamma+2\rho)}{\gamma(k+2)^2}\norms{w^0}^2$, which is exactly \eqref{eq:AcEG_convergence} since $w^0 := Fx^0 + \xi^0$.
If $J_{\eta T}$ is nonexpansive, then by \eqref{eq:G_vs_w}, we obtain \eqref{eq:AcEG_convergence_b}.
\Eproof
\end{proof}

\beforesec
\section{Nesterov's Accelerated Past-FBFS Method for Solving \eqref{eq:coMI}}\label{sec:APEG}
\aftersec
Alternative to Section~\ref{sec:AEG}, in this section, we develop a new Nesterov's accelerated variant of the FBFS method \eqref{eq:EG} for solving \eqref{eq:coMI} by adopting the ideas from \cite{popov1980modification,tseng2000modified}.
However, our scheme is different from \eqref{eq:PEG} since it replaces $Fx^k$ by $Fy^{k-1}$ instead of replacing $Fy^k$ by $Fx^{k-1}$.

\beforesubsec
\subsection{Algorithmic derivation}\label{subsec:APEG_derive}
\aftersubsec
\revised{Since \eqref{eq:AcEG} requires two evaluations $Fx^k$ and $Fy^k$ of $F$ at each iteration as in \eqref{eq:EG}, we reduce its per-iteration complexity by replacing $Fx^k$ by $Fy^{k-1}$ computed from the previous iteration.
Rearranging the resulting scheme and using the resolvent of $\eta T$, we obtain the following new scheme:
Given $x^0\in\R^p$, set  $z^0 = y^0 := x^0$ and $\hat{w}^{-1} := \frac{\eta}{\hat{\eta}_0}w^0 \in \frac{\eta}{\hat{\eta}_0}(Fx^0 + Tx^0)$, at each iteration $k$, we update
\begin{equation}\label{eq:AcPEG}
\arraycolsep=0.2em
\left\{\begin{array}{lcl}
x^k & := & J_{\eta T}\big(y^k - \eta Fy^k +  \hat{\eta}_k\hat{w}^{k-1} \big), \vspace{1ex}\\
\hat{w}^k & := & \frac{1}{\eta}( y^k - x^k + \hat{\eta}_k \hat{w}^{k-1}), \vspace{1ex}\\
z^{k+1} &:= & x^k - \gamma \hat{w}^k, \vspace{1ex}\\
y^{k+1} &:= & z^{k+1} + \theta_k(z^{k+1} - z^k)  +  \nu_k(y^k - z^{k+1}), 
\end{array}\right.
\tag{APEG}
\end{equation}
where $\theta_k$, $\nu_k$, $\eta$, and $\gamma$ are given parameters, which will be determined later.

Note that the idea of replacing either $Fx^k$ or $Fy^k$ by a past value was proposed by Popov in \cite{popov1980modification} for the classical extragradient method, and then extended to FBFS variants as seen from \eqref{eq:PEG}.
However, unlike \eqref{eq:PEG}, we replace $Fx^k$ by $Fy^{k-1}$, leading to \eqref{eq:AcPEG}.
Clearly, at each iteration $k$,  \eqref{eq:AcPEG} only requires one evaluation of $J_{\eta T}$ and one evaluation $Fy^k$ of $F$ compared to \eqref{eq:AcEG}.

If we eliminate $z^k$ from \eqref{eq:AcPEG}, then we obtain the following scheme:
\begin{equation}\label{eq:AcPEG_impl}
\arraycolsep=0.2em
\left\{\begin{array}{lcl}
x^k & := & J_{\eta T}\big( y^k - \eta Fy^k +  \hat{\eta}_k\hat{w}^{k-1} \big), \vspace{1ex}\\
\hat{w}^k & := & \frac{1}{\eta}( y^k - x^k + \hat{\eta}_k \hat{w}^{k-1}), \vspace{1ex}\\
y^{k+1} &:= &  x^k + \theta_k(x^k - x^{k-1}) - \beta_k \hat{w}^k + \sigma_k \hat{w}^{k-1}, 
\end{array}\right.
\end{equation}
where $\beta_k := \gamma(1 + \theta_k - \nu_k) - \eta \nu_k$ and $\sigma_k := \gamma\theta_k - \hat{\eta}_k\nu_k$.
This representation uses both $\hat{w}^k$ and $\hat{w}^{k-1}$ to update $y^{k+1}$.
However, since \eqref{eq:AcPEG_impl} uses $Fy^{k-1}$ instead of $Fx^{k-1}$, it is different from \eqref{eq:PEG} as well as the reflected-forward-backward splitting method in \cite{cevher2021reflected,malitsky2015projected}.
}

\beforesubsec
\subsection{Convergence analysis}\label{subsec:convergence_APEG}
\aftersubsec
To form a potential function for \eqref{eq:AcPEG}, starting from the first and second lines of \eqref{eq:AcPEG}, we have $\hat{w}^k - Fy^k =  \frac{1}{\eta}(y^k - x^k + \hat{\eta}_k\hat{w}^{k-1}) - Fy^k  \in Tx^k$.
Hence, if we define $w^k :=  \hat{w}^k + Fx^k - Fy^k$, then  $w^k  \in Fx^k + Tx^k$.
Therefore, we propose to use the following potential or Lyapunov function to analyze the convergence of \eqref{eq:AcPEG}, which is similar to $\Pc_k$ in \eqref{eq:AcEG_potential_func}:
\begin{equation}\label{eq:AcPEG_potential_func}
\arraycolsep=0.2em
\begin{array}{lcl}
\hat{\Pc}_k &:= & a_k\norms{w^{k-1}}^2 + b_k\iprods{w^{k-1}, z^k - y^k} + \norms{z^k + t_k(y^k - z^k) - x^{\star}}^2 \vspace{1ex}\\
&& + {~} c_k\norms{w^{k-1} - \hat{w}^{k-1}}^2.
\end{array}
\end{equation}
where $a_k > 0$, $b_k > 0$, $c_k > 0$, and $t_k > 0$ are given parameters, determined later.

Similar to the proof of \eqref{eq:P_lowerbound}, using $z^k = x^{k-1} - \gamma \hat{w}^{k-1}$ from the third line of \eqref{eq:AcPEG}, the $\rho$-co-hypomonotonicity of $F+T$ as $\iprods{w^{k-1}, x^{k-1} - x^{\star}} \geq -\rho\norms{w^{k-1}}^2$, and Young's inequality as $-\iprods{w^{k-1}, \hat{w}^{k-1}} = -\iprods{w^{k-1}, \hat{w}^{k-1} - w^{k-1}} - \norms{w^{k-1}}^2 \geq -\frac{3}{2}\norms{w^{k-1}}^2 - \frac{1}{2}\norms{w^{k-1} - \hat{w}^{k-1}}^2$, we can lower bound $\hat{\Pc}_k$ as follows:
\begin{equation}\label{eq:Nes_coPEAG_potential_func}
\arraycolsep=0.2em
\begin{array}{lcl}
\hat{\Pc}_k   & =   & \norms{z^k + t_k(y^k - z^k) - x^{\star} - \frac{b_k}{2t_k}w^{k-1}}^2  + c_k\norms{w^{k-1} - \hat{w}^{k-1}}^2 \vspace{1ex}\\
&& + {~} \left( a_k - \frac{b_k^2}{4t_k^2}\right) \norms{w^{k-1}}^2 + \frac{b_k}{t_k}\iprods{w^{k-1}, x^{k-1} - x^{\star}} -  \frac{\gamma b_k}{t_k}\iprods{w^{k-1}, \hat{w}^{k-1}} \vspace{1ex}\\
& \geq &  \left( a_k - \frac{b_k^2}{4t_k^2} - \frac{(3\gamma + 2\rho) b_k}{2t_k} \right) \norms{w^{k-1}}^2 + \left( c_k - \frac{\gamma b_k}{2t_k} \right) \norms{w^{k-1} - \hat{w}^{k-1}}^2.
\end{array}
\end{equation}
Now, we are ready to establish the convergence of \ref{eq:AcPEG} in the following theorem.

\begin{theorem}\label{th:AcPEG_convergence}
Assume that $F + T$ in \eqref{eq:coMI} is $\rho$-co-hypomonotone, $F$ is $L$-Lipschitz continuous such that $8\sqrt{3}L\rho <  1$, and $x^{\star} \in \zer{F+T}$ is an arbitrary solution of \eqref{eq:coMI}.
Let $\gamma > 0$ be such that $16L^2 \big[ 3(3\gamma + 2\rho)^2 + \gamma(2\gamma + \rho) \big]  \leq 1$, which always exists, and $\sets{(x^k, y^k, z^k)}$ be generated by \eqref{eq:AcPEG}  using
\begin{equation}\label{eq:AcPEG_para_choice}
t_k := k + 2, \ \ \eta := 2(3\gamma + 2\rho), \ \ \hat{\eta}_k = \frac{(t_k-1)\eta}{t_k},  \ \ \theta_k := \frac{t_k-1}{t_{k+1}}, \ \text{and}  \ \ \nu_k := \frac{t_k}{t_{k+1}}.
\end{equation}
Then, for $k\geq 0$, the following bound holds:
\begin{equation}\label{eq:AcPEG_convergence}
\norms{Fx^k + \xi^k}^2 \leq \frac{4 \norms{x^0 - x^{\star}}^2 + 8\gamma(5\gamma + 2\rho) \norms{Fx^0 + \xi^0}^2}{\gamma^2(k+2)(k+4)}, \quad\text{where} \quad \xi^k \in Tx^k.
\end{equation}
\revised{Consequently,  if $J_{\eta T}$ is nonexpansive, then for $\Gc_{\eta}$ defined by \eqref{eq:coMI_residual}, we have}
\begin{equation}\label{eq:AcPEG_convergence_b}
\revised{\norms{\Gc_{\eta}x^k}^2 \leq \frac{4\norms{x^0 - x^{\star}}^2 + 8\gamma(5\gamma + 2\rho) \norms{Fx^0 + \xi^0}^2}{\gamma^2(k+2)(k+4)}.}
\end{equation}
\end{theorem}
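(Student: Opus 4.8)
The plan is to mirror the structure of the already-completed proof of Theorem~\ref{th:AcEG_convergence}, but working with the augmented potential $\hat{\Pc}_k$ from \eqref{eq:AcPEG_potential_func}, whose extra term $c_k\norms{w^{k-1} - \hat{w}^{k-1}}^2$ is designed to absorb the cross-iteration coupling produced by the ``past'' value $Fy^{k-1}$. Since the $z$- and $y$-updates in the last two lines of \eqref{eq:AcPEG} coincide exactly with those of \eqref{eq:AcEG}, the expansion of the distance term $\norms{z^k + t_k(y^k - z^k) - x^{\star}}^2$ and the telescoping identity (the quantity $\Tc_{[2]}$) carry over verbatim, and imposing the same cancellation conditions \eqref{eq:AcEG_para_cond1} on $t_k$, $\theta_k$, $\nu_k$, $b_k$ collapses $\hat{\Pc}_k - \hat{\Pc}_{k+1}$ to $a_k\norms{w^{k-1}}^2 - a_{k+1}\norms{w^k}^2 + b_k\iprods{w^k - w^{k-1}, z^{k+1} - z^k} + b_{k+1}\iprods{\nu_k w^k - \theta_k w^{k-1}, y^k - z^{k+1}}$, plus the residual difference $c_k\norms{w^{k-1} - \hat{w}^{k-1}}^2 - c_{k+1}\norms{w^k - \hat{w}^k}^2$.

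Next I would apply the $\rho$-co-hypomonotonicity of $F+T$ to the first inner product exactly as in \eqref{eq:AcEG_proof7}, using $w^k \in Fx^k + Tx^k$ (valid because $\hat{w}^k - Fy^k \in Tx^k$ and $w^k := \hat{w}^k + Fx^k - Fy^k$). The genuinely new computation is the second inner product: from the second line of \eqref{eq:AcPEG} one gets $x^k - y^k = \hat{\eta}_k\hat{w}^{k-1} - \eta\hat{w}^k$, hence $y^k - z^{k+1} = (\gamma + \eta)\hat{w}^k - \hat{\eta}_k\hat{w}^{k-1}$; substituting and expanding gives a quadratic form in $w^k, w^{k-1}, \hat{w}^k, \hat{w}^{k-1}$. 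I would then feed in the Lipschitz estimate $\norms{w^k - \hat{w}^k} = \norms{Fx^k - Fy^k} \leq L\norms{x^k - y^k} = L\norms{\eta\hat{w}^k - \hat{\eta}_k\hat{w}^{k-1}}$. Unlike in \eqref{eq:AcEG}, this bound couples $\hat{w}^k$ with the \emph{past} quantity $\hat{w}^{k-1}$, which is exactly why the $c_k$-term is indispensable.

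With $\eta = 2(3\gamma + 2\rho)$, $\hat{\eta}_k = \frac{(t_k-1)\eta}{t_k}$, $\nu_k = \frac{t_k}{t_{k+1}}$, $\theta_k = \frac{t_k-1}{t_{k+1}}$, and $b_{k+1} = \frac{b_k t_{k+1}}{t_k-1}$, I would reduce the expression to a weighted sum of $\norms{w^{k-1}}^2$, $\norms{w^k}^2$, $\norms{w^k - \hat{w}^k}^2$, and $\norms{w^{k-1} - \hat{w}^{k-1}}^2$, and then demand that each coefficient has the right sign, as in \eqref{eq:AcEG_proof10_cond}. This forces the choice $a_k := \frac{b_k(\gamma t_k + \eta)}{2t_k}$ so that the $\norms{w^{k-1}}^2$ and $\norms{w^k}^2$ terms telescope, a matching recursion for $c_k$ so that the residual terms telescope, and the stepsize bound $16L^2\big[3(3\gamma+2\rho)^2 + \gamma(2\gamma+\rho)\big] \leq 1$ together with $8\sqrt{3}L\rho < 1$ (which, as $\gamma\to 0^{+}$, reduces to $192L^2\rho^2 < 1$, guaranteeing a feasible $\gamma$) to ensure nonnegativity of the combined residual quadratic form. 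These constraints yield $b_k = \frac{b_0(k+1)(k+2)}{2}$ and a comparably explicit $c_k$, so $\hat{\Pc}_{k+1} \leq \hat{\Pc}_k$.

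Finally I would telescope to $\hat{\Pc}_k \leq \hat{\Pc}_0$, evaluate $\hat{\Pc}_0$ explicitly using $z^0 = y^0 = x^0$ and the initialization $\hat{w}^{-1} = \frac{\eta}{\hat{\eta}_0}w^0 = 2w^0$ (for which the $c_0$-contribution is consistent since $\hat{w}^0 = w^0$), and combine with the lower bound \eqref{eq:Nes_coPEAG_potential_func}, choosing $b_0 := 2\gamma$, to extract $\norms{w^{k-1}}^2 \leq \BigO{1/[(k+2)(k+4)]}$. Since $w^k = Fx^k + \xi^k$ with $\xi^k = \hat{w}^k - Fy^k \in Tx^k$, this is exactly \eqref{eq:AcPEG_convergence}, and \eqref{eq:AcPEG_convergence_b} then follows from $\norms{\Gc_{\eta}x^k} \leq \norms{w^k}$ whenever $J_{\eta T}$ is nonexpansive, precisely as in Lemma~\ref{le:P_lowerbound}. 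The main obstacle is the third step: verifying that, after all substitutions, the residual quadratic form in $(\hat{w}^k, \hat{w}^{k-1})$ coming from the Lipschitz coupling is dominated by the telescoping $c_k$-terms. It is this cross-iteration coupling, absent in the AEG analysis, that both necessitates the extra potential term and forces the smaller stepsize and the stronger condition $8\sqrt{3}L\rho < 1$.
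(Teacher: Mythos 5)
Your proposal follows essentially the same route as the paper: the same augmented potential $\hat{\Pc}_k$ with the $c_k\norms{w^{k-1}-\hat{w}^{k-1}}^2$ term, the same telescoping identity inherited from the AEG analysis, the co-hypomonotonicity plus Cauchy--Schwarz/Young treatment of the two inner products, the Lipschitz bound $\norms{w^k-\hat{w}^k}\le L\norms{\eta\hat{w}^k-\hat{\eta}_k\hat{w}^{k-1}}$, and the same parameter choices ($\eta=2(3\gamma+2\rho)$, $b_k=\tfrac{b_0(k+1)(k+2)}{2}$, $b_0=2\gamma$), leading to the stated rate; your identification of the cross-iteration coupling as the reason for the extra potential term and the smaller stepsize is exactly the paper's rationale. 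Two small corrections you would discover upon execution: first, the co-hypomonotonicity step is \emph{not} handled ``exactly as in \eqref{eq:AcEG_proof7}'', because here $z^{k+1}=x^k-\gamma\hat{w}^k$ rather than $x^k-\gamma w^k$, so an additional Young step is needed to split off $\norms{w^k-\hat{w}^k}^2$ and $\norms{w^{k-1}-\hat{w}^{k-1}}^2$ contributions (this is \eqref{eq:AcPEG_proof6}); second, the forced choice of $a_k$ is $\tfrac{b_k(2\gamma t_k+\eta)}{4t_k}$, not the AEG value $\tfrac{b_k(\gamma t_k+\eta)}{2t_k}$ you wrote --- the latter would violate the third condition in \eqref{eq:AcPEG_proof9_para_cond2} (the analogue of your telescoping requirement), though your own ``choose $a_k$ so the $\norms{w^{k-1}}^2$ and $\norms{w^k}^2$ coefficients telescope'' prescription yields the correct value once the coefficients are computed.
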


\begin{remark}[\textit{\textbf{Choice of $\gamma$}}]\label{re:choice_of_param1}
\revised{
If $8\sqrt{3}L\rho < 1$, then $\bar{\gamma} := \frac{\sqrt{29 - 92L^2\rho^2} - 74 L\rho}{116L} > 0$.
In this case, if we choose $\gamma \in (0, \bar{\gamma}]$, then the condition $16L^2 [ 3(3\gamma + 2\rho)^2 + \gamma(2\gamma + \rho) ] \leq 1$ in Theorem~\ref{th:AcPEG_convergence} holds.
}
\end{remark}

\begin{proof}[The proof of Theorem~\ref{th:AcPEG_convergence}]
Similar to the proof of \eqref{eq:AcEG_proof5}  in Theorem~\ref{th:AcEG_convergence}, using the last line of \eqref{eq:AcPEG}, the definition \eqref{eq:AcPEG_potential_func} of $\hat{\Pc}_k$, and the update rules \eqref{eq:AcPEG_para_choice},  we can show that
\begin{equation}\label{eq:AcPEG_proof5} 
\hspace{-2ex}
\arraycolsep=0.2em
\begin{array}{lcl}
\hat{\Pc}_k - \hat{\Pc}_{k+1} &= &   a_k\norms{w^{k-1}}^2 - a_{k+1}\norms{w^k}^2  + c_k\norms{w^{k-1} - \hat{w}^{k-1}}^2 - c_{k+1}\norms{w^k - \hat{w}^k}^2 \vspace{1ex}\\
&& + {~} b_k\iprods{w^k - w^{k-1}, z^{k+1} - z^k}  + b_{k+1} \iprods{\nu_k w^k - \theta_kw^{k-1}, y^k - z^{k+1}}.
\end{array} 
\hspace{-2ex}
\end{equation}
By the $\rho$-co-hypomonotonicity of $F+T$, we have $\iprods{w^k - w^{k-1}, x^k - x^{k-1}} \geq -\rho\norms{w^k - w^{k-1}}^2$.
Using this expression and $z^{k+1} = x^k - \gamma\hat{w}^k = x^k - \gamma w^k + \gamma(w^k - \hat{w}^k)$ from the third line of \eqref{eq:AcPEG} and both the Cauchy-Schwarz and Young inequalities, we can derive that
\begin{equation}\label{eq:AcPEG_proof6} 
\hspace{-0.2ex}
\arraycolsep=0.2em
\begin{array}{lcl}
\iprods{w^k - w^{k-1}, z^{k+1} - z^k}  &= & \iprods{w^k - w^{k-1}, x^k - x^{k-1}} -  \gamma \iprods{w^k - w^{k-1},  \hat{w}^k - \hat{w}^{k-1}} \vspace{1ex}\\
& \geq &  \gamma\iprods{w^k - w^{k-1}, (w^k - \hat{w}^k) - (w^{k-1} - \hat{w}^{k-1})} \vspace{1ex}\\
&& - {~} (\gamma + \rho ) \norms{w^k - w^{k-1}}^2 \vspace{1ex}\\
& \geq & -\left(2\gamma + \rho \right) \norms{w^k - w^{k-1}}^2 -  \frac{\gamma}{2}\norms{w^k - \hat{w}^k}^2 \vspace{1ex}\\
&& - {~} \frac{\gamma}{2}\norms{w^{k-1} - \hat{w}^{k-1}}^2.
\end{array}
\hspace{-4ex}
\end{equation}
Alternatively, from the second lines of \eqref{eq:AcPEG}, we have $x^k = y^k - \eta \hat{w}^k +    \hat{\eta}_k \hat{w}^{k-1}$.
Combining this expression and the third line of \eqref{eq:AcPEG}, we get $y^k - z^{k+1} = (\gamma  + \eta)\hat{w}^k  - \hat{\eta}_k \hat{w}^{k-1} = \gamma w^k + \eta \hat{w}^k - \hat{\eta}_kw^{k-1} + \gamma(\hat{w}^k - w^k) - \hat{\eta}_k(\hat{w}^{k-1} - w^{k-1})$.
Utilizing this expression, and both the Cauchy-Schwarz and Young inequalities again, for any $\beta > 0$, we can show that
\begin{equation}\label{eq:AcPEG_proof7} 
\hspace{-0.0ex}
\arraycolsep=0.2em
\begin{array}{lcl}
\Tc_{[3]} &:= & \iprods{\nu_k w^k - \theta_kw^{k-1}, y^k - z^{k+1}} \vspace{1ex}\\
&= & \iprods{\nu_k w^k - \theta_kw^{k-1}, \gamma w^k + \eta \hat{w}^k - \hat{\eta}_kw^{k-1} } \vspace{1ex}\\
&& + {~} \iprods{\nu_k w^k - \theta_kw^{k-1}, \gamma(\hat{w}^k - w^k) - \hat{\eta}_k(\hat{w}^{k-1} - w^{k-1})} \vspace{1ex}\\
& \geq & \gamma\nu_k \norms{w^k}^2 + \hat{\eta}_k\theta_k\norms{w^{k-1}}^2  - (\hat{\eta}_k\nu_k + \gamma\theta_k)\iprods{w^k, w^{k-1}} \vspace{1ex}\\
&& - {~} \eta\theta_k\iprods{\hat{w}^k, w^{k-1}} +  \eta\nu_k\iprods{w^k, \hat{w}^k} - \frac{\beta }{2\nu_k}\norms{\nu_k w^k - \theta_kw^{k-1}}^2  \vspace{1ex}\\
&& - {~} \frac{\gamma^2 \nu_k}{\beta} \norms{w^k - \hat{w}^k}^2 - \frac{\hat{\eta}_k^2\nu_k}{\beta }\norms{w^{k-1} - \hat{w}^{k-1}}^2.
\end{array}
\hspace{-2.0ex}
\end{equation}
Further expanding \eqref{eq:AcPEG_proof6} and \eqref{eq:AcPEG_proof7}, and then substituting their results  into \eqref{eq:AcPEG_proof5}, and using $b_{k+1}\theta_k = b_k$ from \eqref{eq:AcEG_para_cond1}, we can derive
\begin{equation}\label{eq:AcPEG_proof8} 
\hspace{-0ex}
\arraycolsep=0.1em
\begin{array}{lcl}
\hat{\Pc}_k - \hat{\Pc}_{k+1} &\geq &     \left[ c_k - b_k \left( \frac{\gamma}{2} + \frac{\hat{\eta}_k^2\nu_k}{\beta\theta_k} \right) \right] \norms{w^{k-1} - \hat{w}^{k-1}}^2 \vspace{1ex}\\
&& - {~} \left[ c_{k+1} + b_k \left( \frac{\gamma}{2} + \frac{\gamma^2\nu_k}{\beta\theta_k}\right)  \right] \norms{w^k - \hat{w}^k}^2 \vspace{1ex}\\
&& + {~} \left[ a_k - b_k\left(2\gamma + \rho + \frac{\beta\theta_k}{2\nu_k} - \hat{\eta}_k\right) \right] \norms{w^{k-1}}^2 \vspace{1ex}\\
&& + {~}  \left[ b_k\left( \frac{(2\gamma - \beta) \nu_k}{2\theta_k}  - 2\gamma - \rho \right) - a_{k+1} \right] \norms{w^k}^2  \vspace{1ex}\\
&& + {~} b_k\left( 3\gamma + 2\rho + \beta -  \frac{\nu_k\hat{\eta}_k}{\theta_k}  \right) \iprods{w^k, w^{k-1}} + \eta b_k \iprods{\hat{w}^k, w^k - w^{k-1}} \vspace{1ex}\\
&& + {~}   \eta b_k \left(\frac{\nu_k}{\theta_k} - 1\right) \iprods{w^k, \hat{w}^k}.
\end{array} 
\hspace{-0ex}
\end{equation}
Next, by the Lipschitz continuity of $F$, $w^k - \hat{w}^k = Fx^k - Fy^k$, and $y^k - x^k =   \eta \hat{w}^k -   \hat{\eta}_k \hat{w}^{k-1}$, we can show that $\norms{w^k - \hat{w}^k}^2 = \norms{Fx^k - Fy^k}^2 \leq L^2\norms{x^k - y^k}^2 = L^2\norms{\eta \hat{w}^k -  \hat{\eta}_k \hat{w}^{k-1}}^2$.
Therefore, for any $\omega > 0$, using Young's inequality, this expression leads to $0 \geq \omega\norms{w^k - \hat{w}^k}^2 + \norms{w^k - \hat{w}^k}^2 - 2(1+\omega)L^2 \norms{ \eta \hat{w}^k - \hat{\eta}_kw^{k-1} }^2 - 2(1+\omega)L^2\hat{\eta}_k^2\norms{w^{k-1} - \hat{w}^{k-1} }^2$.
Denoting $M := 2(1+\omega)L^2$ and expanding the last inequality, we get
\begin{equation*} 
\arraycolsep=0.2em
\begin{array}{lcl}
0 & \geq & \omega\norms{w^k - \hat{w}^k}^2 + \norms{w^k}^2 + ( 1 - M \eta^2) \norms{\hat{w}^k}^2 - 2(1 - M\eta\hat{\eta}_k) \iprods{w^k, \hat{w}^k}  \vspace{1ex}\\
&&- {~}   2M\eta \hat{\eta}_k \iprods{\hat{w}^k, w^k - w^{k-1}} -  M\hat{\eta}_k^2 \norms{ w^{k-1}}^2 - M\hat{\eta}_k^2\norms{w^{k-1} - \hat{w}^{k-1}}^2.
\end{array}
\end{equation*}
Multiplying this inequality by $\frac{b_k}{2M\hat{\eta}_k}$ and adding the result to \eqref{eq:AcPEG_proof8}, we obtain
\begin{equation*} 
\arraycolsep=0.2em
\begin{array}{lcl}
\hat{\Pc}_k - \hat{\Pc}_{k+1} &\geq &  \left[ c_k - b_k \left( \frac{\gamma}{2} + \frac{\hat{\eta}_k^2\nu_k}{\beta\theta_k} + \frac{\hat{\eta}_k}{2}\right)  \right] \norms{w^{k-1} - \hat{w}^{k-1}}^2 \vspace{1ex}\\
&& + {~}  \left[ b_k  \left(\frac{\omega}{2M\hat{\eta}_k} - \frac{\gamma}{2} - \frac{\gamma^2 \nu_k}{\beta \theta_k}\right) -  c_{k+1} \right] \norms{w^k - \hat{w}^k}^2 \vspace{1ex}\\
&& + {~}  \left[ a_k - b_k\left(2\gamma + \rho + \frac{\beta\theta_k}{2\nu_k} - \frac{\hat{\eta}_k}{2}\right) \right] \norms{w^{k-1}}^2 + \frac{(1-M\eta^2)b_k}{2M\hat{\eta}_k}\norms{\hat{w}^k}^2 \vspace{1ex}\\
&& + {~} \left[ b_k\left( \frac{1}{2M\hat{\eta}_k} + \frac{(2\gamma - \beta) \nu_k}{2\theta_k}  - 2\gamma - \rho \right) - a_{k+1} \right] \norms{w^k}^2  \vspace{1ex}\\
&& + {~} b_k\left( 3\gamma +  2\rho  + \beta - \frac{\nu_k\hat{\eta}_k}{\theta_k}  \right) \iprods{w^k, w^{k-1}}  - b_k\left( \frac{1}{M\hat{\eta}_k} -  \frac{\eta \nu_k}{\theta_k} \right) \iprods{w^k, \hat{w}^k}.
\end{array} 
\end{equation*}
Let us choose $\beta := 3\gamma + 2\rho$, and  $\eta := 2( 3\gamma + 2\rho) = 2\beta$ and  $\hat{\eta}_k := \frac{\eta \theta_k}{\nu_k}$.
Then, we have $3\gamma + 2\rho + \beta  - \frac{\nu_k\hat{\eta}_k}{\theta_k} = 0$.
In this case, we can simplify the last expression as
\begin{equation}\label{eq:AcPEG_proof8_c} 
\arraycolsep=0.2em
\begin{array}{lcl}
\hat{\Pc}_k - \hat{\Pc}_{k+1} &\geq &  \left[ c_k - \frac{b_k}{2}\big( \gamma +  \frac{5\eta\theta_k}{\nu_k} \big) \right] \norms{w^{k-1} - \hat{w}^{k-1}}^2  \vspace{1ex}\\
&& + {~}  \left[ \frac{b_k}{2\theta_k} \big( \frac{ (\omega - 4M\gamma^2) \nu_k}{M\eta} - \gamma\theta_k  \big) - c_{k+1} \right] \norms{w^k - \hat{w}^k}^2 \vspace{1ex}\\
&& + {~}  \left[ a_k - b_k \big(  2\gamma + \rho  -  \frac{\eta\theta_k}{4\nu_k} \big)   \right] \norms{w^{k-1}}^2  \vspace{1ex}\\
&& + {~} \frac{(1 - M\eta^2)b_k\nu_k}{2M\eta\theta_k}\left( \norms{\hat{w}^k}^2 - 2\iprods{w^k, \hat{w}^k} \right) \vspace{1ex}\\
&& + {~} \left[ b_k\big(  \frac{\nu_k}{2M\eta\theta_k} + \frac{(4\gamma - \eta)\nu_k}{4\theta_k} - 2\gamma - \rho \big) - a_{k+1} \right] \norms{w^k}^2 \vspace{1ex}\\
& = &   \left[ c_k - \frac{b_k}{2}\big( \gamma +  \frac{5\eta\theta_k}{\nu_k} \big) \right] \norms{w^{k-1} - \hat{w}^{k-1}}^2  \vspace{1ex}\\
&& + {~}  \left[ \frac{b_k}{2\theta_k} \big( \frac{(1+\omega - M\eta^2 - 4M\gamma^2) \nu_k}{M\eta} - \gamma\theta_k  \big) - c_{k+1} \right] \norms{w^k - \hat{w}^k}^2 \vspace{1ex}\\
&& + {~}  \left[ a_k - b_k \big(  2\gamma + \rho  -  \frac{\eta\theta_k}{4\nu_k} \big)   \right] \norms{w^{k-1}}^2  \vspace{1ex}\\
&& + {~} \left[ b_k\big(  \frac{(\eta + 4\gamma) \nu_k}{4\theta_k} - 2\gamma - \rho \big) - a_{k+1} \right] \norms{w^k}^2.
\end{array} 
\end{equation}
Let us impose the following conditions on the parameters:
\begin{equation}\label{eq:AcPEG_proof9_para_cond2}  
\arraycolsep=0.2em
\left\{ \begin{array}{lcl}
1 + \omega - M\eta^2 &\geq & 0, \vspace{1ex}\\
a_k -   b_k \big(  2\gamma + \rho   -  \frac{\eta\theta_k}{4\nu_k} \big) & \geq & 0,  \vspace{1ex}\\
b_k\left[ \frac{(\eta + 4\gamma) \nu_k}{4\theta_k} - 2\gamma - \rho \right] - a_{k+1} & \geq & 0, \vspace{1ex}\\
c_k - \frac{b_k}{2} \big( \gamma + \frac{5\eta\theta_k}{\nu_k} \big) \ & \geq & 0,  \vspace{1ex}\\
\frac{b_k}{2\theta_k} \left[  \frac{(1 + \omega - M\eta^2 - 4M\gamma^2) \nu_k}{M\eta}  - \gamma\theta_k  \right]  - c_{k+1}  & \geq & 0.
\end{array}\right. 
\end{equation}
Then, \eqref{eq:AcPEG_proof8_c} reduces to $\hat{\Pc}_{k+1} \leq \hat{\Pc}_k$ for all $k\geq 0$.

We still update $b_k = \frac{b_0(k+1)(k+2)}{2}$ as in \eqref{eq:AcEG}, which leads to $b_{k+1} = \frac{b_k}{\theta_k} = \frac{b_kt_{k+1}}{t_k-1}$ by  \eqref{eq:AcPEG_para_choice}.
Our next step is to show that the parameters updated by  \eqref{eq:AcPEG_para_choice} guarantee the five conditions of \eqref{eq:AcPEG_proof9_para_cond2}.
First, let us choose $a_k :=  b_k \left(  2\gamma + \rho   -  \frac{\eta \theta_k}{4\nu_k}\right) = \frac{b_k( 2\gamma t_k + \eta) }{4t_k} = \frac{b_0(k+1)( \gamma k + 5\gamma + 2\rho)}{4}$.
Then, the second condition of \eqref{eq:AcPEG_proof9_para_cond2} is automatically satisfied. 
Since $b_{k+1} = \frac{b_kt_{k+1}}{t_k - 1}$ and $t_{k+1} = t_k + 1$ by \eqref{eq:AcPEG_para_choice}, the choice of $a_{k+1} = \frac{b_0(k+2)(\gamma k + 6\gamma + 2\rho)}{4}$ also guarantees the third condition of \eqref{eq:AcPEG_proof9_para_cond2}.

If we choose $c_k := \frac{b_k}{2} \big( \gamma + \frac{5\eta\theta_k}{\nu_k} \big) = \frac{b_0(k+1)[ (31\gamma + 20\rho)(k+1) + \gamma ] }{4}$, then the fourth condition of \eqref{eq:AcPEG_proof9_para_cond2} is obviously satisfied with an equality.
The last condition of \eqref{eq:AcPEG_proof9_para_cond2} holds if $M(6\eta^2 + 2\gamma\eta + 4\gamma^2) \leq 1 + \omega$.
Using $\eta = 2(3\gamma + 2\rho)$ and $M = 2(1+\omega)L^2$ for any $\omega > 0$, the last condition is equivalent to 
\begin{equation*}
8L^2[ 6(3\gamma + 2\rho)^2 + \gamma(3\gamma + 2\rho) + \gamma^2] \leq 1.
\end{equation*}
This condition is exactly equivalent to $16L^2 [ 3(3\gamma + 2\rho)^2 + \gamma(2\gamma + \rho) ] \leq 1$ in Theorem~\ref{th:AcPEG_convergence}.
Moreover, it also guarantee that $2L^2\eta^2 \leq 1$, which shows that the first condition of \eqref{eq:AcPEG_proof9_para_cond2} holds.

Using the parameters in \eqref{eq:AcPEG_para_choice}, $a_k$, $b_k$, and $c_k$ above, and \eqref{eq:Nes_coPEAG_potential_func}, we can show that
\begin{equation*} 
\arraycolsep=0.2em
\begin{array}{lcl}
\hat{\Pc}_k &\geq &  \left( a_k - \frac{b_k^2}{4t_k^2} - \frac{(3\gamma + 2\rho) b_k}{2t_k} \right) \norms{w^{k-1}}^2 + \left( c_k - \frac{\gamma b_k}{2t_k} \right) \norms{w^{k-1} - \hat{w}^{k-1}}^2 \vspace{1ex}\\
&= & \frac{b_0(k+1)[ (4\gamma - b_0) (k+1) + 4\gamma]}{16}  \norms{w^{k-1}}^2 +  \frac{(31\gamma + 20\rho) b_0(k+1)^2 }{4} \norms{w^{k-1} - \hat{w}^{k-1}}^2.
\end{array} 
\end{equation*}
%
Let us choose $b_0 := 2\gamma$.
Then, we have $\hat{\Pc}_k \geq \frac{\gamma^2  (k+1)(k+3)}{4}\norms{w^{k-1}}^2$.
Since $\hat{\Pc}_k \leq \hat{\Pc}_0 = a_0\norms{w^{-1}}^2 + b_0\iprods{w^{-1}, z^0 - y^0} + \norms{z^0 + t_0(y^0 - z^0) - x^{\star}}^2 +  c_0\norms{w^{-1} - \hat{w}^{-1}}^2$, and $y^0 = z^0 = x^0$ and $w^{-1} = \hat{w}^{-1}$, we get $\hat{\Pc}_k \leq \hat{\Pc}_0 = a_0\norms{w^{-1}}^2 + \norms{x^0 - x^{\star}}^2 = 2\gamma(5\gamma + 2\rho) \norms{w^0}^2 + \norms{x^0 - x^{\star}}^2$ since $w^{-1} = 2w^0$.
Combining these derivations, we finally get $\norms{w^{k-1}}^2 \leq \frac{4\norms{x^0 - x^{\star}}^2 + 8\gamma(5\gamma + 2\rho) \norms{w^0}^2}{\gamma^2(k+1)(k+3)}$, which implies \eqref{eq:AcPEG_convergence}.
If $J_{\eta T}$ is nonexpansive, then using $\norms{\Gc_{\eta}x^k} \leq \norms{w^k}$ from Lemma~\ref{le:P_lowerbound}, we obtain  \eqref{eq:AcPEG_convergence_b}.
\Eproof
\end{proof}


\beforesec
\section{The Extra-Anchored Gradient-Type Methods for Solving \eqref{eq:coMI}}\label{apdx:Halpern_AcEG}
\aftersec
In this section, we provide an alternative convergence analysis for Halpern's fixed-point-type variants of \eqref{eq:EG} and its past FBFS scheme, which is called EAG (extra-anchored gradient) method \cite{lee2021fast,yoon2021accelerated} and PEAG (past extra-anchored gradient) method in \cite{tran2021halpern}, respectively, where their names are  stemmed from \cite{yoon2021accelerated}.
These schemes have been extended to \eqref{eq:coMI} in \cite{cai2022accelerated} and \cite{cai2022baccelerated}, respectively.
Our algorithmic derivation as well as our analysis can be viewed as direct extensions of the results in \cite{lee2021fast,tran2021halpern,yoon2021accelerated}.
Moreover, together with the new analysis, our \ref{eq:cPEAG} is significantly different from that of \cite{cai2022baccelerated} since we replace $Fx^k$ by $Fy^{k-1}$ instead of replacing $Fy^k$ by $Fx^{k-1}$ as often seen in past extragradient-type methods.

\beforesubsec
\subsection{The extra-anchored gradient method for solving \eqref{eq:coMI}}\label{subsec:EAG2}
\aftersubsec
We first derive a variant of the extra-anchored gradient method (EAG) from \cite{lee2021fast,yoon2021accelerated} to solve \eqref{eq:coMI}.
We call this method \eqref{eq:EAG} as in \cite{yoon2021accelerated}, which can be described as follows:
Starting from $x^0\in\R^p$, at each iteration $k \geq 0$, we update
\begin{equation}\label{eq:EAG}
\arraycolsep=0.2em
\left\{\begin{array}{lcl}
y^k        &:= &  x^k + \tau_k(x^0 - x^k) - ( \hat{\eta}_k - 2\rho(1-\tau_k)) (Fx^k + \xi^k), \vspace{1ex}\\
x^{k+1} &:= & x^k + \tau_k(x^0 - x^k) - \eta (Fy^k + \xi^{k+1}) + 2\rho(1-\tau_k)(Fx^k + \xi^k),
\end{array}\right.
\tag{EAG}
\end{equation}
where $\xi^k \in Tx^k$, $\tau_k \in (0, 1)$, $\eta  > 0$, and $\hat{\eta}_k > 0$ are  determined later.

This variant was studied in \cite{cai2022accelerated}. 
However, if $T = 0$, then \eqref{eq:EAG} reduces to the EAG variant in \cite{lee2021fast}.
%
We introduce the following quantities to simplify our analysis:
\begin{equation}\label{eq:EAG_ex2}
w^k := Fx^k + \xi^k \in Fx^k + Tx^k \quad\text{and} \quad \hat{w}^k := Fy^{k-1} + \xi^k.
\end{equation}
Then, we can rewrite \eqref{eq:EAG} as follows:
\begin{equation}\label{eq:EAG_ex3}
\arraycolsep=0.2em
\left\{\begin{array}{lcl}
y^k        &:= &  x^k + \tau_k(x^0 - x^k) - ( \hat{\eta}_k - 2\rho(1-\tau_k)) w^k, \vspace{1ex}\\
x^{k+1} &:= & x^k + \tau_k(x^0 - x^k) - \eta \hat{w}^{k+1} + 2\rho(1-\tau_k)w^k.
\end{array}\right.
\end{equation}
The interpretation \eqref{eq:EAG_ex3} looks very similar to the fast extragradient scheme in \cite{lee2021fast} for root-finding problem \eqref{eq:coME}, where $w^k$ and $\hat{w}^{k+1}$ play the roles of $Fx^k$ and $Fy^k$, respectively in  \cite{lee2021fast}.

Since $x^{k+1}$ appears on both sides of the second line of \eqref{eq:EAG_ex3}, we will use the resolvent $J_{\eta T}$ to rewrite it as follows:
\begin{equation}\label{eq:EAG_impl}
\arraycolsep=0.2em
\left\{\begin{array}{lcl}
y^k        &:= &  x^k + \tau_k(x^0 - x^k) - ( \hat{\eta}_k - 2\rho(1-\tau_k)) (Fx^k + \xi^k), \vspace{1ex}\\
x^{k+1}  & := & J_{\eta T}\big( y^k - \eta Fy^k  +  \hat{\eta}_k(Fx^k + \xi^k)  \big), \vspace{1ex}\\
\xi^{k+1} & := &  \frac{1}{\eta}\big( y^k -  x^{k+1} - \eta Fy^k  +  \hat{\eta}_k(Fx^k + \xi^k)  \big),
\end{array}\right.
\end{equation}
where $\tau_k \in (0, 1)$ and $\xi^0 \in Tx^0$ is arbitrary.

To establish convergence of \eqref{eq:EAG}, we use the following Lyapunov function:
\begin{equation}\label{eq:EAG_potential_func}
\Vc_k :=  a_k\norms{w^k}^2 + b_k\iprods{w^k, x^k - x^0},
\end{equation}
where $a_k > 0$ and $b_k > 0$ are given parameters.
This function is similar to the one in \cite{lee2021fast,yoon2021accelerated} and is the same as the one in \cite{cai2022accelerated}.
The following theorem states the convergence of \eqref{eq:EAG}.

\begin{theorem}\label{th:cEAG_convergence}
Assume that $F + T$ in \eqref{eq:coMI} is $\rho$-co-hypomonotone, $F$ is $L$-Lipschitz continuous such that $2L\rho < 1$, and $\zer{F+T}\neq\emptyset$.
Let $\sets{(x^k, y^k)}$ be generated by \eqref{eq:EAG} using
\begin{equation}\label{eq:EAG_param_update}
\tau_k := \frac{1}{k+2}, \quad \eta \in \left(2\rho,  \frac{1}{L}\right], \quad\text{and} \quad  \hat{\eta}_k := (1-\tau_k)\eta.
\end{equation}
Then, for all $k\geq 0$ and any $x^{\star}\in\zer{F+T}$, the following result holds:
\begin{equation}\label{eq:EAG_convergence1}
\norms{Fx^k + \xi^k}^2 \leq  \frac{4\norms{x^0 - x^{\star}}^2 + 2\eta(\eta - 2\rho)\norms{Fx^0 + \xi^0}^2}{(\eta - 2\rho)^2(k + 1)^2}, \quad\text{where}\quad \xi^k \in Tx^k.
\end{equation}
\revised{
Consequently, if $J_{\eta T}$ is nonexpansive, then for $\Gc_{\eta}$ defined by \eqref{eq:coMI_residual}, we have}
\begin{equation}\label{eq:EAG_convergence1b}
\revised{\norms{\Gc_{\eta}x^k}^2 \leq  \frac{4\norms{x^0 - x^{\star}}^2 + 2\eta(\eta - 2\rho)\norms{Fx^0 + \xi^0}^2}{(\eta - 2\rho)^2(k + 1)^2}.}
\end{equation}
\end{theorem}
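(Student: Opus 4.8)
The plan is to adapt the anchored-potential argument of \cite{lee2021fast,yoon2021accelerated} to the resolvent setting, working throughout with the shorthand $w^k := Fx^k + \xi^k \in Fx^k + Tx^k$ and $\hat{w}^{k+1} := Fy^k + \xi^{k+1}$ from \eqref{eq:EAG_ex2}, so that \eqref{eq:EAG} takes the compact form \eqref{eq:EAG_ex3}. I would establish \eqref{eq:EAG_convergence1} in three stages: (i) prove that the Lyapunov function $\Vc_k$ in \eqref{eq:EAG_potential_func} is non-increasing, $\Vc_{k+1}\leq\Vc_k$; (ii) lower bound $\Vc_k$ by a multiple of $\norms{w^k}^2$ modulo a term carrying $\norms{x^0-x^{\star}}$; and (iii) combine these with the trivial initial value $\Vc_0 = a_0\norms{w^0}^2$ (the cross term vanishes since $x^0-x^0=0$). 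The residual bound \eqref{eq:EAG_convergence1b} is then immediate: since $w^k\in Fx^k+Tx^k$, the single-valued/nonexpansive argument already carried out in the proof of Lemma~\ref{le:P_lowerbound} gives $\norms{\Gc_{\eta}x^k}\leq\norms{w^k}$ verbatim.

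The heart of the argument is stage (i). I would expand $\Vc_k-\Vc_{k+1}$ from \eqref{eq:EAG_potential_func} and substitute the two structural identities obtained from \eqref{eq:EAG_ex3}: first $x^{k+1}-x^k = -\tau_k(x^k-x^0) - \eta\hat{w}^{k+1} + 2\rho(1-\tau_k)w^k$, and second, from subtracting the two lines of \eqref{eq:EAG_ex3} together with $\hat{\eta}_k = (1-\tau_k)\eta$, the relation $y^k-x^{k+1} = \eta\hat{w}^{k+1} - \hat{\eta}_k w^k = \eta\big(\hat{w}^{k+1}-(1-\tau_k)w^k\big)$. Choosing $b_k$ so that $b_{k+1}(1-\tau_k)=b_k$, the two inner-product terms of $\Vc_k-\Vc_{k+1}$ collapse into $b_k\iprods{w^k-w^{k+1}, x^k-x^0}$, still carrying the anchor $x^0$. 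To eliminate it, I would apply $\rho$-co-hypomonotonicity of $F+T$ to the consecutive graph pairs $(x^k,w^k)$ and $(x^{k+1},w^{k+1})$, namely $\iprods{w^{k+1}-w^k, x^{k+1}-x^k}\geq -\rho\norms{w^{k+1}-w^k}^2$, and substitute the first identity for $x^{k+1}-x^k$; this produces a lower bound on $\iprods{w^k-w^{k+1},x^k-x^0}$ in which the anchor disappears, leaving a pure quadratic form in $w^k$, $w^{k+1}$, and $\hat{w}^{k+1}$. Here the correction term $2\rho(1-\tau_k)w^k$ in \eqref{eq:EAG} is exactly what offsets the hypomonotone defect $-\rho\norms{w^{k+1}-w^k}^2$.

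To close stage (i), I would feed in the Lipschitz inequality: since $\hat{w}^{k+1}-w^{k+1} = Fy^k-Fx^{k+1}$, the second identity yields $\norms{\hat{w}^{k+1}-w^{k+1}}^2 \leq L^2\norms{y^k-x^{k+1}}^2 = L^2\eta^2\norms{\hat{w}^{k+1}-(1-\tau_k)w^k}^2$, which I would add with a nonnegative multiplier. The coefficients $a_k,b_k$ are then fixed by requiring every squared-norm and cross-term coefficient of the resulting quadratic form to be non-negative; with $\tau_k = 1/(k+2)$ and $\hat{\eta}_k = (1-\tau_k)\eta$ one is led to the closed forms $b_k = b_0(k+1)$ and $a_k = \rho b_0(k+1) + \tfrac{b_0(\eta-2\rho)}{2}(k+1)^2$, so that $a_0 = \tfrac{b_0\eta}{2}$ and $a_k-\rho b_k = \tfrac{b_0(\eta-2\rho)}{2}(k+1)^2$, while the condition $\eta\leq 1/L$ (i.e. $1-L^2\eta^2\geq 0$) keeps the $\norms{\hat{w}^{k+1}}^2$-coefficient non-negative and $\eta>2\rho$ keeps the effective stepsize positive. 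For stage (ii), I would use $\rho$-co-hypomonotonicity at the solution pair $(x^{\star},0)\in\gra{F+T}$ to get $\iprods{w^k,x^k-x^{\star}}\geq -\rho\norms{w^k}^2$, split $\iprods{w^k,x^k-x^0}=\iprods{w^k,x^k-x^{\star}}+\iprods{w^k,x^{\star}-x^0}$, and bound the last term by Cauchy--Schwarz, giving $\Vc_k\geq (a_k-\rho b_k)\norms{w^k}^2 - b_k\norms{w^k}\,\norms{x^0-x^{\star}}$. Combining with $\Vc_k\leq\Vc_0=a_0\norms{w^0}^2$ produces a scalar quadratic inequality in $\norms{w^k}$; solving it and loosening via $(p+q)^2\leq 2p^2+2q^2$ gives $\norms{w^k}^2\leq \tfrac{b_k^2}{(a_k-\rho b_k)^2}\norms{x^0-x^{\star}}^2 + \tfrac{2a_0}{a_k-\rho b_k}\norms{w^0}^2$, and inserting the explicit $a_k,b_k,a_0$ yields exactly \eqref{eq:EAG_convergence1}.

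The main obstacle is the bookkeeping in stage (i): pinning down the coefficient sequences $a_k,b_k$ that make every coefficient of the final quadratic form non-negative after the Lipschitz term is absorbed. The delicate point is the interplay between the algorithmic correction constant $2\rho$ and the co-hypomonotonicity constant $\rho$: the substitution of $x^{k+1}-x^k$ into the co-hypomonotonicity inequality must produce terms that precisely neutralize the $-\rho\norms{w^{k+1}-w^k}^2$ defect, and this balance is what forces the admissibility window $\eta\in(2\rho,1/L]$, which is nonempty exactly under the standing hypothesis $2L\rho<1$.
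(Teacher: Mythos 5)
Your proposal is correct and follows essentially the same route as the paper's proof: the same Lyapunov function $\Vc_k$ from \eqref{eq:EAG_potential_func}, the same parameter choices $b_k=b_0(k+1)$ and $a_k=\tfrac{b_0[(\eta-2\rho)(k+1)+2\rho](k+1)}{2}$, co-hypomonotonicity applied both to consecutive iterates (to eliminate the anchor) and at $(x^{\star},0)$ (to lower bound $\Vc_k$), and absorption of the Lipschitz inequality $\norms{\hat{w}^{k+1}-w^{k+1}}\leq L\eta\norms{\hat{w}^{k+1}-(1-\tau_k)w^k}$ with multiplier $\tfrac{\eta b_{k+1}}{2\tau_k}$. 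The only cosmetic deviations are that you collapse the two anchored inner products before invoking co-hypomonotonicity (the paper substitutes both step identities directly into the co-hypomonotonicity inequality, which is algebraically equivalent) and that you solve the final scalar quadratic in $\norms{w^k}$ rather than applying Young's inequality; both yield the identical constants in \eqref{eq:EAG_convergence1}.
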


\begin{proof}
Since \eqref{eq:EAG} is equivalent to \eqref{eq:EAG_ex3}, from the second line of \eqref{eq:EAG}, we have
\begin{equation}\label{eq:EAG_proof1}
\arraycolsep=0.2em
\left\{\begin{array}{lcl}
x^{k+1} - x^k & = &  - \tau_k(x^k - x^0) - \eta \hat{w}^{k+1} + 2\rho(1-\tau_k)w^k \vspace{1ex}\\
x^{k+1} - x^k & = & -\tfrac{\tau_k}{1-\tau_k}(x^{k+1} - x^0) - \tfrac{\eta}{1-\tau_k}\hat{w}^{k+1} + 2\rho w^k.
\end{array}\right.
\end{equation}
Next, since  $F+T$ is $\rho$-co-hypomonotone and $w^k \in Fx^k + Tx^k$, we have $\iprods{w^{k+1} - w^k, x^{k+1} - x^k} + \rho\norms{w^{k+1} - w^k}^2 \geq 0$.
This relation leads to
\begin{equation*} 
\arraycolsep=0.2em
\begin{array}{lcl}
0 &\leq & \iprods{w^{k+1}, x^{k+1} - x^k} - \iprods{w^k, x^{k+1} - x^k}  + \rho\norms{w^{k+1} - w^k}^2 \vspace{1ex}\\
&\overset{\tiny\eqref{eq:EAG_proof1}}{=} & \tau_k\iprods{w^k, x^k - x^0} - \frac{\tau_k}{1-\tau_k}\iprods{w^{k+1}, x^{k+1} - x^0} + \eta \iprods{w^k, \hat{w}^{k+1}} - 2\rho(1-\tau_k)\norms{w^k}^2 \vspace{1ex}\\
&& -  {~}  \frac{\eta}{1-\tau_k}\iprods{w^{k+1}, \hat{w}^{k+1}} + 2\rho\iprods{w^{k+1}, w^k} + \rho\norms{w^{k+1} - w^k}^2.
\end{array}
\end{equation*}
Multiplying this expression by $\frac{b_k}{\tau_k}$, rearranging the result, and using $b_{k+1} = \frac{b_k}{1-\tau_k}$, we obtain the following expression 
\begin{equation*} 
\arraycolsep=0.2em
\begin{array}{lcl}
\Tc_{[1]} &:= & b_k\iprods{w^k, x^k - x^0} - b_{k+1}\iprods{w^{k+1}, x^{k+1} - x^0} \vspace{1ex} \\ 
&\geq & \frac{\eta b_{k+1}}{\tau_k}\iprods{w^{k+1} - w^k, \hat{w}^{k+1}} + \eta b_{k+1} \iprods{\hat{w}^{k+1}, w^k} - \frac{\rho b_k}{\tau_k}\norms{w^{k+1}}^2 + \frac{\rho b_k(1 - 2\tau_k)}{\tau_k}\norms{w^k}^2.
\end{array}
\end{equation*}
Adding $a_k\norms{w^k}^2 - a_{k+1}\norms{w^{k+1}}^2$ to $\Tc_{[1]}$ and  using $\Vc_k$ from \eqref{eq:EAG_potential_func}, we can show that
\begin{equation}\label{eq:EAG_proof2}
\hspace{-3ex}
\arraycolsep=0.2em
\begin{array}{lcl}
\Vc_k - \Vc_{k+1} &= & a_k\norms{w^k}^2 - a_{k+1}\norms{w^{k+1}}^2 + b_k\iprods{w^k, x^k - x^0} - b_{k+1}\iprods{w^{k+1}, x^{k+1} - x^0} \vspace{1ex}\\
&\geq &  \big(a_k + \frac{\rho b_k(1 - 2\tau_k)}{\tau_k} \big) \norms{w^k}^2 - \big(a_{k+1} + \frac{\rho b_k}{\tau_k}\big)\norms{w^{k+1}}^2  \vspace{1ex}\\
&& +  {~}  \frac{\eta b_{k+1} }{\tau_k}\iprods{w^{k+1} - w^k, \hat{w}^{k+1}} + \eta b_{k+1} \iprods{\hat{w}^{k+1}, w^k}.
\end{array}
\hspace{-5ex}
\end{equation}
Next, from \eqref{eq:EAG_ex3}, we have $x^{k+1} - y^k = -\eta \hat{w}^{k+1} + \hat{\eta}_kw^k$.
By the $L$-Lipschitz continuity of $F$, we have $\norms{w^{k+1} - \hat{w}^{k+1}}^2 = \norms{Fx^{k+1} - Fy^k}^2 \leq L^2\norms{x^{k+1} - y^k}^2 = L^2\norms{\eta\hat{w}^{k+1} - \hat{\eta}_kw^k}^2$.
Expanding this inequality and rearranging the result, we get
\begin{equation*} 
\arraycolsep=0.2em
\begin{array}{lcl}
0 & \geq & \norms{w^{k+1}}^2 + (1 - L^2\eta^2)\norms{\hat{w}^{k+1}}^2 - 2\iprods{w^{k+1} - w^k, \hat{w}^{k+1}} \vspace{1ex}\\
&& - {~} 2\big(1 - L^2\eta\hat{\eta}_k)\iprods{\hat{w}^{k+1}, w^k} - L^2\hat{\eta}_k^2\norms{w^k}^2.
\end{array}
\end{equation*}
Multiplying this inequality by $\frac{\eta b_{k+1} }{2\tau_k}$ and adding the result to \eqref{eq:EAG_proof2}, we get
\begin{equation}\label{eq:EAG_proof3}
\hspace{-4ex}
\arraycolsep=0.2em
\begin{array}{lcl}
\Vc_k - \Vc_{k+1} &\geq & \left(a_k  - \frac{L^2\eta \hat{\eta}_k^2b_{k+1} - 2\rho b_k(1-2\tau_k)}{2\tau_k} \right)\norms{w^k}^2 + \left(\frac{\eta b_{k+1} - 2\rho b_k}{2\tau_k}  - a_{k+1} \right) \norms{w^{k+1}}^2  \vspace{2ex}\\
&& +  {~}  \frac{\eta(1 - L^2\eta^2)b_{k+1}}{2\tau_k}\norms{\hat{w}^{k+1}}^2  -  \frac{\eta(1 - \tau_k - L^2\eta\hat{\eta}_k) b_{k+1} }{\tau_k} \iprods{\hat{w}^{k+1}, w^k}.
\end{array}
\hspace{-7ex}
\end{equation}
Let us choose $\tau_k := \frac{1}{k + 2}$ and $\hat{\eta}_k := (1-\tau_k)\eta$ as in \eqref{eq:EAG_param_update}.
We also choose $a_{k+1} := \frac{b_{k+1}[ \eta  - 2\rho(1-\tau_k)]}{2\tau_k} = \frac{[(\eta - 2\rho)(k + 2) + 2\rho] b_{k+1}}{2}$ provided that $\eta > 2\rho$.
By the update of $b_k$ as $b_{k+1} = \frac{b_k}{1-\tau_k} $, we can easily show that $b_k = b_0(k+1)$ for $k\geq 0$.
Therefore, we get $a_k = \frac{b_0[(\eta - 2\rho)(k+1) + 2\rho](k+1) }{2}$.

Using these parameters and assuming that $L\eta \leq 1$, we can simplify \eqref{eq:EAG_proof3} as
\begin{equation}\label{eq:EAG_proof3_b}
\arraycolsep=0.2em
\begin{array}{lcl}
\Vc_k - \Vc_{k+1} &\geq & \frac{\eta(1 - L^2\eta^2)b_{k+1}}{2\tau_k}\norms{\hat{w}^{k+1} - (1-\tau_k)w^k}^2 \geq 0.
\end{array}
\end{equation}
Finally, since $0 \in Fx^{\star} + Tx^{\star}$, we have $\iprods{w^k, x^k - x^{\star}} \geq -\rho\norms{w^k}^2$.
Using this inequality, the definition of $\Vc_k$, and Young's inequality, we can show that
\begin{equation*}
\arraycolsep=0.2em
\begin{array}{lcl}
\Vc_k & = & a_k\norms{w^k}^2 + b_k\iprods{w^k, x^{\star} - x^0} + b_k\iprods{w^k, x^k - x^{\star}} \vspace{1ex}\\
&\geq &  ( a_k - \rho b_k) \norms{w^k}^2 - b_k\norms{w^k}\norms{x^0 - x^{\star}} \vspace{1ex}\\
&\geq &  \left( a_k - \rho b_k - \frac{(\eta - 2\rho) b_k^2}{4b_0} \right) \norms{w^k}^2 - \frac{b_0}{\eta - 2\rho}\norms{x^0 - x^{\star}}^2 \vspace{1ex}\\
&= & \frac{b_0(\eta - 2\rho)(k + 1)^2}{4}\norms{w^k}^2 - \frac{b_0}{\eta- 2\rho}\norms{x^0 - x^{\star}}^2.
\end{array}
\end{equation*}
Moreover, we also have $\Vc_0 = a_0\norms{w^0}^2 + b_0\iprods{w^0, x^0 - x^0} = \frac{\eta b_0}{2}\norms{w^0}^2$.
Combining all these derivations and using \eqref{eq:EAG_proof3_b} to get $\Vc_k \leq \Vc_0 = \frac{\eta b_0}{2}\norms{w^0}^2$, we obtain \eqref{eq:EAG_convergence1}.
The conditions $\eta > 2\rho$ and $L\eta \leq 1$ imply that $2\rho < \eta \leq \frac{1}{L}$, which is satisfied if $2L\rho < 1$.
\revised{The bound \eqref{eq:EAG_convergence1b} is a consequence of \eqref{eq:EAG_convergence1} and $\norms{\Gc_{\eta}x^k} \leq \norms{Fx^k + \xi^k}$ in Lemma~\ref{le:P_lowerbound}.}
\Eproof
\end{proof}

Clearly, the condition on $L$ and $\rho$, and the convergence rate bounds \eqref{eq:AcEG_convergence} and \eqref{eq:EAG_convergence1} of \eqref{eq:AcEG} are similar to the ones in \eqref{eq:EAG}, respectively.
As shown in \cite{lee2021fast} for the case $T= 0$, these convergence rates are tight.

\beforesubsec
\subsection{The past extra-anchored gradient method for solving \eqref{eq:coMI}}\label{subsec:PEAG2}
\aftersubsec
We now develop a variant of the past extra-anchored gradient method (PEAG) from \cite{tran2021halpern} to solve \eqref{eq:coMI}.
We still call this method \eqref{eq:cPEAG} and it is presented as follows:
Starting from $x^0\in\R^p$, we set $y^{-1} := x^0$, and for $k \geq 0$ we update
\begin{equation}\label{eq:cPEAG}
\arraycolsep=0.2em
\left\{\begin{array}{lcl}
y^k        &:= &  x^k + \tau_k(x^0 - x^k) - ( \hat{\eta}_k - \beta_k) (Fy^{k-1} + \xi^k), \vspace{1ex}\\
x^{k+1} &:= & x^k + \tau_k(x^0 - x^k) - \eta (Fy^k + \xi^{k+1}) + \beta_k(Fy^{k-1} + \xi^k), 
\end{array}\right.
\tag{PEAG}
\end{equation}
where $\xi^k \in Tx^k$ is an arbitrary element of $Tx^k$, $\tau_k \in (0, 1)$, $\eta > 0$, $\hat{\eta}_k > 0$, and $\beta_k > 0$ are given parameters, which will be determined later.

Note that if $T = 0$, then \eqref{eq:cPEAG} reduces to the past extra-anchored gradient scheme in \cite{tran2021halpern}.
This scheme can be considered as a modification of \eqref{eq:EAG} by replacing $Fx^k$ by $Fy^{k-1}$ using Popov's idea in \cite{popov1980modification}.
However, it is different from \eqref{eq:PEG} as we do not replace $Fy^k$ by $Fx^{k-1}$.
Compared to \cite{cai2022baccelerated}, since their method is rooted from the reflected gradient method in \cite{malitsky2015projected}, it is also different from \eqref{eq:cPEAG} though both methods have the same per-iteration complexity and convergence rate.
Moreover, our condition $2\sqrt{34} L\rho < 1$ in Theorem~\ref{th:cPEAG_convergence} below is better than $12\sqrt{3}L\rho < 1$ in  \cite{cai2022baccelerated}.
Our proof below is also different from  \cite{cai2022baccelerated}, where we follow the proof technique in \cite{tran2021halpern}.

Let us rewrite \eqref{eq:cPEAG} into a different form.
We introduce  $w^k := Fx^k + \xi^k \in Fx^k + Tx^k$ and $\hat{w}^k := Fy^{k-1} + \xi^k$ as in \eqref{eq:EAG}.
Then, we can rewrite \eqref{eq:cPEAG} as 
\begin{equation}\label{eq:cPEAG_ex3}
\arraycolsep=0.2em
\left\{\begin{array}{lcl}
y^k        &:= &  x^k + \tau_k(x^0 - x^k) - ( \hat{\eta}_k  - \beta_k) \hat{w}^k, \vspace{1ex}\\
x^{k+1} &:= & x^k + \tau_k(x^0 - x^k) - \eta \hat{w}^{k+1} + \beta_k \hat{w}^k.
\end{array}\right.
\end{equation}
This scheme has the same form as \cite{tran2021halpern} but with different parameters, where $\hat{w}^k$ plays the same role as $Fy^k$ in \cite{tran2021halpern}.

Note that the second line of \eqref{eq:cPEAG} is an implicit update since $x^{k+1}$ is also in $\xi^{k+1} \in Tx^{k+1}$ on the right-hand side of this line.
To resolve this implicit issue, we rewrite it as follows:
\begin{equation}\label{eq:EAG_impl}
\arraycolsep=0.2em
\left\{\begin{array}{lcl}
y^k        &:= &  x^k + \tau_k(x^0 - x^k) - ( \hat{\eta}_k  - \beta_k)\hat{w}^k, \vspace{1ex}\\
x^{k+1}  & := & J_{\eta T}\big( y^k - \eta Fy^k  +  \hat{\eta}_k\hat{w}^k  \big), \vspace{1ex}\\
\hat{w}^{k+1} & := &  \frac{1}{\eta}\big( y^k   - x^{k+1} +  \hat{\eta}_k\hat{w}^k \big), 
\end{array}\right.
\end{equation}
where $x^0 \in \R^p$ is given, $y^{-1}  := x^0$, and $\hat{w}^0 := w^0 \in Fx^0 + Tx^0$ is arbitrary.

To establish convergence of \eqref{eq:cPEAG}, we use the following Lyapunov function:
\begin{equation}\label{eq:cPEAG_potential_func}
\hat{\Vc}_k :=   a_k\norms{w^k}^2 + b_k\iprods{w^k, x^k - x^0} + c_k\norms{w^k - \hat{w}^k}^2,
\end{equation}
where $w^k := \hat{w}^k + Fx^k - Fy^{k-1} = Fx^k + \xi^k$, $a_k > 0$, $b_k > 0$, and $c_k > 0$ are given parameters.
Then, the following theorem states the convergence rate of \eqref{eq:cPEAG}.

\begin{theorem}\label{th:cPEAG_convergence}
Assume that $F+ T$ in \eqref{eq:coMI} is $\rho$-co-hypomonotone, $F$ is $L$-Lipschitz continuous such that $2\sqrt{34}L\rho < 1$, and $\zer{F + T}\neq\emptyset$.  
Let $\eta := \frac{1}{L}\sqrt{\frac{2}{17}}$ be a given stepsize, and $\sets{(x^k, y^k)}$ be generated by \eqref{eq:cPEAG} using the following parameters:
\begin{equation}\label{eq:cPEAG_param_update}
\tau_k := \frac{1}{k+2}, \quad \beta_k := \frac{4\rho(1-\tau_k)}{1 + \tau_k},  \quad\text{and} \quad  \hat{\eta}_k := (1-\tau_k)\eta.
\end{equation}
Then, for all $k\geq 0$ and any $x^{\star}\in\zer{F+T}$, we have 
\begin{equation}\label{eq:cPEAG_convergence1}
\norms{Fx^k + \xi^k}^2  \leq    \frac{1}{(k+1)^2}\left[ \frac{4}{(\eta - 4\rho)^2}\norms{x^0 - x^{\star}}^2 +  \frac{2(3\eta - 2\rho)}{3(\eta - 4\rho)}  \norms{Fx^0 + \xi^0}^2\right],
\end{equation}
where $\xi^k \in Tx^k$ and $\xi^0 \in Tx^0$.

\revised{
Consequently, if $J_{\eta T}$ is nonexpansive, then for $\Gc_{\eta}$ defined by \eqref{eq:coMI_residual}, we have}
\begin{equation}\label{eq:cPEAG_convergence1b}
\revised{ \norms{\Gc_{\eta}x^k}^2 \leq \frac{1}{(k+1)^2}\Big[ \frac{4}{(\eta - 4\rho)^2}\norms{x^0 - x^{\star}}^2 +  \frac{2(3\eta - 2\rho)}{3(\eta - 4\rho)}  \norms{Fx^0 + \xi^0}^2 \Big].}
\end{equation}
\end{theorem}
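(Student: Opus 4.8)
The plan is to reproduce the structure of the analysis of \eqref{eq:EAG} in Theorem~\ref{th:cEAG_convergence}, but driven by the augmented potential \eqref{eq:cPEAG_potential_func}, whose extra term $c_k\norms{w^k - \hat{w}^k}^2$ is tailored to absorb the mismatch introduced by the ``past'' evaluation $Fy^{k-1}$. First I would extract from \eqref{eq:cPEAG_ex3} the difference identity $x^{k+1} - x^k = -\tau_k(x^k - x^0) - \eta\hat{w}^{k+1} + \beta_k\hat{w}^k$, together with its $(1-\tau_k)$-normalized version as in \eqref{eq:EAG_proof1}, and the resolvent relation $x^{k+1} - y^k = -\eta\hat{w}^{k+1} + \hat{\eta}_k\hat{w}^k$ obtained by subtracting the two lines of \eqref{eq:cPEAG_ex3}. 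Using $w^k \in Fx^k + Tx^k$ and the $\rho$-co-hypomonotonicity of $F+T$, I would write $\iprods{w^{k+1} - w^k, x^{k+1} - x^k} \geq -\rho\norms{w^{k+1} - w^k}^2$, substitute the difference identity, multiply by $b_k/\tau_k$, and use $b_{k+1} = b_k/(1-\tau_k)$ to telescope the inner-product part of the potential exactly as in the derivation of \eqref{eq:EAG_proof2}.

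Next I would assemble $\hat{\Vc}_k - \hat{\Vc}_{k+1}$ by adding $a_k\norms{w^k}^2 - a_{k+1}\norms{w^{k+1}}^2$ together with the new telescoping pair $c_k\norms{w^k - \hat{w}^k}^2 - c_{k+1}\norms{w^{k+1} - \hat{w}^{k+1}}^2$. The essential input is the Lipschitz estimate $\norms{w^{k+1} - \hat{w}^{k+1}}^2 = \norms{Fx^{k+1} - Fy^k}^2 \leq L^2\norms{\eta\hat{w}^{k+1} - \hat{\eta}_k\hat{w}^k}^2$, which I would expand and inject (scaled by an appropriate multiple of $b_{k+1}/\tau_k$) so as to cancel the $\norms{\hat{w}^{k+1}}^2$ terms and the cross terms $\iprods{\hat{w}^{k+1}, \cdot}$, mirroring the step that produces \eqref{eq:EAG_proof3}. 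The point of departure from the EAG argument is that the resolvent relation here carries $\hat{\eta}_k\hat{w}^k$ rather than $\hat{\eta}_k w^k$; rewriting $\hat{w}^k = w^k - (w^k - \hat{w}^k)$ to bring the main terms back to $w^k$ generates precisely the $\norms{w^k - \hat{w}^k}^2$ quantities that the $c_k$-term is designed to dominate via Young's inequality.

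I would then collect the result as a nonnegative combination of $\norms{w^k}^2$, $\norms{w^{k+1}}^2$, $\norms{w^k - \hat{w}^k}^2$, $\norms{w^{k+1} - \hat{w}^{k+1}}^2$ and a perfect-square remainder in $\hat{w}^{k+1}$, and read off the sign conditions on the coefficients. Inserting $\tau_k = 1/(k+2)$, $\hat{\eta}_k = (1-\tau_k)\eta$ and $\beta_k = 4\rho(1-\tau_k)/(1+\tau_k)$ from \eqref{eq:cPEAG_param_update}, with the polynomial ansatz $b_k = b_0(k+1)$, $a_k \sim \tfrac{b_0(\eta - 4\rho)}{2}(k+1)^2$ and $c_k$ proportional to $(k+1)^2$, should turn these into algebraic inequalities in $\eta$, $\rho$, $L$ and the two free Young parameters; balancing them is expected to force the stepsize $\eta = \tfrac{1}{L}\sqrt{2/17}$ and, through the requirement $\eta > 4\rho$, the admissibility threshold $2\sqrt{34}\,L\rho < 1$. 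Once $\hat{\Vc}_{k+1} \leq \hat{\Vc}_k$ is in hand, I would lower-bound $\hat{\Vc}_k$ using $\iprods{w^k, x^k - x^{\star}} \geq -\rho\norms{w^k}^2$ and Young's inequality (discarding $c_k\norms{w^k - \hat{w}^k}^2 \geq 0$) to expose a $\tfrac{b_0(\eta - 4\rho)}{4}(k+1)^2\norms{w^k}^2$ term, upper-bound $\hat{\Vc}_0 = a_0\norms{w^0}^2$ using $y^{-1} = x^0$ and $\hat{w}^0 = w^0$ (so the $c_0$- and $b_0$-terms vanish), and divide to obtain \eqref{eq:cPEAG_convergence1}; then \eqref{eq:cPEAG_convergence1b} follows from $\norms{\Gc_{\eta}x^k} \leq \norms{w^k}$ in Lemma~\ref{le:P_lowerbound}.

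The main obstacle will be the parameter bookkeeping in the third step. Unlike the EAG case, the past evaluation couples the gap quantities $\norms{w^k - \hat{w}^k}^2$ across consecutive indices, so the two Young parameters (one from the monotonicity step, one from the Lipschitz injection) must be chosen so that all five coefficient conditions hold simultaneously under the same polynomial growth of $a_k$, $b_k$ and $c_k$. Verifying that the single stepsize $\eta = \tfrac{1}{L}\sqrt{2/17}$ and the threshold $2\sqrt{34}\,L\rho < 1$ render every one of these coefficients nonnegative is the delicate computation to which the remainder of the proof reduces.
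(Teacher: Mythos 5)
Your proposal follows essentially the same route as the paper's proof: the same augmented Lyapunov function $\hat{\Vc}_k$ with the extra $c_k\norms{w^k - \hat{w}^k}^2$ term, the same difference identities and co-hypomonotonicity step telescoped via $b_{k+1} = b_k/(1-\tau_k)$, the same Lipschitz injection with two Young parameters, and the same polynomial ansatz for $a_k$, $b_k$, $c_k$ leading to the coefficient conditions that yield $\eta = \tfrac{1}{L}\sqrt{2/17}$ (via the choice $\omega = 13/4$) and the threshold $2\sqrt{34}\,L\rho < 1$, followed by the identical lower bound on $\hat{\Vc}_k$ and evaluation of $\hat{\Vc}_0$. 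The only minor imprecision is that the stepsize is not uniquely forced by the balancing but is a convenient choice of the free Young parameter; otherwise the plan matches the paper's argument step for step.
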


\begin{proof}
Since \eqref{eq:cPEAG} is equivalent to \eqref{eq:cPEAG_ex3}, from \eqref{eq:cPEAG_ex3}, we can easily show that
\begin{equation}\label{eq:cPEAG_proof1}
\hspace{-2ex}
\arraycolsep=0.2em
\left\{\begin{array}{lcl}
x^{k+1} - x^k & = &  -\tau_k(x^k - x^0) - \eta \hat{w}^{k+1} + \beta_kw^k + \beta_k(\hat{w}^k - w^k) \vspace{1ex}\\
x^{k+1} - x^k & = & -\tfrac{\tau_k}{1-\tau_k}(x^{k+1} - x^0) - \tfrac{\eta}{1-\tau_k}\hat{w}^{k+1}  + \frac{\beta_k}{1-\tau_k}w^k +  \frac{\beta_k}{1-\tau_k}(\hat{w}^k - w^k).
\end{array}\right.
\hspace{-2ex}
\end{equation}
Next, since $F+T$ is $\rho$-co-hypomonotone and $w^k = Fx^k + \xi^k \in Fx^k + Tx^k$, we have $\iprods{w^{k+1} - w^k, x^{k+1} - x^k} + \rho\norms{w^{k+1} - w^k}^2 \geq 0$.
Using \eqref{eq:cPEAG_proof1} into this inequality, we can expand it as
\begin{equation*} 
\arraycolsep=0.2em
\begin{array}{lcl}
\Tc_{[1]} &:= & \tau_k\iprods{w^k, x^k - x^0} - \frac{\tau_k}{1-\tau_k}\iprods{w^{k+1}, x^{k+1} - x^0} \vspace{1ex}\\
&\geq &  \frac{\eta}{1-\tau_k}\iprods{w^{k+1}, \hat{w}^{k+1}}   - \eta \iprods{w^k, \hat{w}^{k+1}}  - \rho\norms{w^{k+1} - w^k}^2 + \beta_k\norms{w^k}^2  \vspace{1ex}\\
&& - {~}   \frac{\beta_k}{1-\tau_k}\iprods{w^{k+1}, w^k} - \frac{\beta_k}{1-\tau_k}\iprods{w^{k+1} - (1-\tau_k)w^k, \hat{w}^k - w^k}.
\end{array}
\end{equation*}
Using Young's inequality and choosing $\beta_k := \frac{4\rho(1-\tau_k)}{1 + \tau_k}$, we can further derive
\begin{equation*} 
\arraycolsep=0.2em
\begin{array}{lcl}
\Tc_{[1]} &:= & \tau_k\iprods{w^k, x^k - x^0} - \frac{\tau_k}{1-\tau_k}\iprods{w^{k+1}, x^{k+1} - x^0} \vspace{1ex}\\
&\geq&  \frac{\eta}{1-\tau_k}\iprods{w^{k+1}, \hat{w}^{k+1}} - \eta \iprods{w^k, \hat{w}^{k+1}} -  \rho\norms{w^{k+1} - w^k}^2 + \beta_k\norms{w^k}^2  \vspace{1ex}\\
&& -  {~}    \frac{\beta_k}{1-\tau_k}\iprods{w^{k+1}, w^k}  - \frac{\beta_k}{4(1-\tau_k)}\norms{w^{k+1} - (1-\tau_k)w^k}^2 - \frac{\beta_k}{1-\tau_k} \norms{\hat{w}^k - w^k}^2 \vspace{1ex}\\
& = & \frac{\eta}{1-\tau_k}\iprods{w^{k+1}, \hat{w}^{k+1}} - \eta \iprods{w^k, \hat{w}^{k+1}} -  \left[ \frac{(1+\tau_k)\beta_k}{2(1-\tau_k)} - 2\rho \right] \iprods{w^{k+1}, w^k} \vspace{1ex}\\
&& - {~} \left[  \frac{\beta_k}{4(1-\tau_k)} + \rho \right] \norms{w^{k+1}}^2 + \left[ \beta_k -  \frac{\beta_k(1-\tau_k)}{4} - \rho \right] \norms{w^k}^2   - \frac{\beta_k}{1-\tau_k} \norms{\hat{w}^k - w^k}^2  \vspace{1ex}\\
&= &   \frac{\eta }{1-\tau_k}\iprods{w^{k+1}, \hat{w}^{k+1}}  - \eta \iprods{w^k, \hat{w}^{k+1}} - \frac{\rho(2 + \tau_k)}{1+\tau_k} \norms{w^{k+1}}^2  + \frac{\rho (2 -  3\tau_k - \tau_k^2) }{1 + \tau_k} \norms{w^k}^2 \vspace{1ex}\\
&& - {~}  \frac{4\rho}{1 + \tau_k} \norms{\hat{w}^k - w^k}^2.
\end{array}
\end{equation*}
Multiplying $\Tc_{[1]}$ by $\frac{b_k}{\tau_k}$, rearranging the result, and using $b_{k+1} = \frac{b_k}{1-\tau_k}$, we can show that
\begin{equation*} 
\arraycolsep=0.2em
\begin{array}{lcl}
\hat{\Tc}_{[1]} &:= & b_k\iprods{w^k, x^k - x^0} - b_{k+1}\iprods{w^{k+1}, x^{k+1} - x^0} \vspace{1ex}\\  
&\geq & \frac{b_{k+1}\eta}{\tau_k}\iprods{w^{k+1} - w^k, \hat{w}^{k+1}}  + b_{k+1}\eta\iprods{w^k, \hat{w}^{k+1}}   - \frac{\rho b_k(2 + \tau_k)}{\tau_k(1 + \tau_k)} \norms{w^{k+1}}^2 \vspace{1ex}\\
&& + {~}  \frac{\rho b_k ( 2 - 3\tau_k - \tau_k^2 ) }{\tau_k(1+\tau_k)} \norms{w^k}^2 -  \frac{4 \rho b_k}{\tau_k(1+\tau_k)} \norms{\hat{w}^k - w^k}^2.
\end{array}
\end{equation*}
Adding $a_k\norms{w^k}^2 - a_{k+1}\norms{w^{k+1}}^2 + c_k\norms{w^k - \hat{w}^k}^2 - c_{k+1}\norms{w^{k+1} - \hat{w}^{k+1}}^2$ to both sides of $\hat{\Tc}_{[1]}$, then  using $\hat{\Vc}_k$ defined by \eqref{eq:cPEAG_potential_func}, we can show that
\begin{equation}\label{eq:cPEAG_proof2}
\arraycolsep=0.2em
\begin{array}{lcl}
\hat{\Vc}_k - \hat{\Vc}_{k+1} 
&\geq &  \left[ a_k +  \frac{\rho b_k ( 2 - 3 \tau_k - \tau_k^2 ) }{\tau_k(1 + \tau_k)}  \right] \norms{w^k}^2 - \left[ a_{k+1} +  \frac{\rho b_k(2 + \tau_k)}{\tau_k(1 + \tau_k)}  \right]\norms{w^{k+1}}^2  \vspace{1ex}\\
&& + \left[c_k -  \frac{4 \rho b_k}{\tau_k(1 + \tau_k)}  \right]\norms{w^k - \hat{w}^k}^2 - c_{k+1}\norms{w^{k+1} - \hat{w}^{k+1}}^2\vspace{1ex}\\
&& +  {~}  \frac{\eta b_{k+1} }{\tau_k}  \iprods{w^{k+1} - w^k, \hat{w}^{k+1}} + \eta b_{k+1} \iprods{\hat{w}^{k+1}, w^k}.
\end{array}
\end{equation}
Now, from \eqref{eq:cPEAG_ex3}, we have $x^{k+1} - y^k = -\eta\hat{w}^{k+1} + \hat{\eta}_k\hat{w}^k$.
Therefore, using the $L$-Lipschitz continuity of $F$ and Young's inequality, we can show that
\begin{equation*} 
\arraycolsep=0.2em
\begin{array}{lcl}
\norms{w^{k+1} - \hat{w}^{k+1}}^2 & = & \norms{Fx^{k+1} - Fy^k}^2 \leq L^2\norms{x^{k+1} - y^k}^2 = L^2\norms{\eta \hat{w}^{k+1} - \hat{\eta}_k\hat{w}^k}^2 \vspace{1ex}\\
&\leq & 2L^2\norms{\eta\hat{w}^{k+1} - \hat{\eta}_kw^k}^2 + 2L^2\hat{\eta}_k^2\norms{w^k - \hat{w}^k}^2.
\end{array}
\end{equation*}
Multiplying this inequality by $(1+\omega)$ for some $\omega > 0$, defining $M := 2(1 + \omega)L^2$, and expanding and rearranging the result, we obtain
\begin{equation*} 
\arraycolsep=0.2em
\begin{array}{lcl}
0 & \geq & \omega\norms{w^{k+1} - \hat{w}^{k+1}}^2 + \norms{w^{k+1}}^2 + (1 - M\eta^2)\norms{\hat{w}^{k+1}}^2 - 2 \iprods{w^{k+1} - w^k, \hat{w}^{k+1}} \vspace{1ex}\\
&& - {~} 2(1 - M\eta\hat{\eta}_k)\iprods{\hat{w}^{k+1}, w^k} - M\hat{\eta}_k^2\norms{w^k}^2 - M\hat{\eta}_k^2\norms{w^k - \hat{w}^k}^2.
\end{array}
\end{equation*}
Multiplying this inequality by $\frac{\eta b_{k+1} }{2\tau_k}$ and adding the result to \eqref{eq:cPEAG_proof2}, we get
\begin{equation}\label{eq:cPEAG_proof3}
\arraycolsep=0.2em
\begin{array}{lcl}
\hat{\Vc}_k - \hat{\Vc}_{k+1} &\geq & \left[ c_k - \frac{4\rho b_k}{\tau_k(1 + \tau_k)}  - \frac{Mb_{k+1}\eta\hat{\eta}_k^2}{2\tau_k} \right]\norms{w^k - \hat{w}^k}^2 \vspace{1ex}\\
&& + {~} \left( \frac{\omega\eta b_{k+1}}{2\tau_k} -  c_{k+1} \right) \norms{w^{k+1} - \hat{w}^{k+1}}^2\vspace{1ex}\\
&& + {~} \left[ a_k + \frac{\rho b_k ( 2  - 3 \tau_k - \tau_k^2 ) }{\tau_k(1 + \tau_k)}  - \frac{Mb_{k+1}\eta\hat{\eta}_k^2}{2\tau_k} \right] \norms{w^k}^2 \vspace{1ex}\\
&& + {~} \left[ \frac{\eta b_{k+1} }{2\tau_k} - \frac{\rho b_k(2 + \tau_k)}{\tau_k(1 + \tau_k)}  - a_{k+1} \right]\norms{w^{k+1}}^2 \vspace{1ex}\\
&& + {~}  \frac{\eta(1-M\eta^2)b_{k+1}}{2\tau_k}\norms{\hat{w}^{k+1}}^2  -  \frac{\eta\left(1 -  \tau_k - M\eta\hat{\eta}_k\right)b_{k+1}}{\tau_k}  \iprods{\hat{w}^{k+1}, w^k}.
\end{array}
\end{equation}
Let us choose $\tau_k := \frac{1}{k+2}$, $\hat{\eta}_k := (1 - \tau_k)\eta$, $a_k := \frac{b_k}{2}\left(\eta(k+1) -  4\rho k + \frac{2\rho(k-1)}{k+3} \right)$, and $c_k := \frac{b_k}{2}\left( M\eta^3(k+1) +  \frac{8\rho(k+2)^2}{k+3} \right)$.
Then, utilizing these choices and $b_{k+1}(1-\tau_k) = b_k$, \eqref{eq:cPEAG_proof3} reduces to
\begin{equation}\label{eq:cPEAG_proof3_b}
\arraycolsep=0.2em
\begin{array}{lcl}
\hat{\Vc}_k - \hat{\Vc}_{k+1} &\geq & \frac{\eta(1-M\eta^2)b_{k+1}}{2\tau_k}\norms{\hat{w}^{k+1} - (1-\tau_k)w^k}^2 + \frac{2\rho b_{k+1}(k+2)}{(k+3)(k+4)}\norms{w^{k+1}}^2 \vspace{1ex}\\
&& + {~} \frac{b_{k+1}(k+2)}{2}\left( \omega\eta -   M\eta^3 - \frac{8\rho(k+3)^2}{(k+2)(k+4)}  \right) \norms{w^{k+1} - \hat{w}^{k+1}}^2. 
\end{array}
\end{equation}
Clearly, we need to choose $\omega$ such that $\omega \eta \geq M\eta^3 + \frac{8\rho(k+3)^2}{(k+2)(k+4)}$, which holds if $\omega \eta \geq M\eta^3 + 9\rho$.
Moreover, to guarantee $a_k > 0$, we need to choose $\eta > 4\rho$.
Therefore, if the following three conditions hold:
\begin{equation}\label{eq:cPEAG_proof4}
2(1+\omega)L^2\eta^2 \leq 1, \quad  \omega \eta \geq 2(1+\omega)L^2\eta^3 + 9\rho, \quad \text{and} \quad \eta > 4\rho,
\end{equation}
then, from \eqref{eq:cPEAG_proof3_b}, we have $\hat{\Vc}_{k+1} \leq \hat{\Vc}_k$ for all $k\geq 0$.

For simplicity of our analysis, let us choose $\omega := \frac{13}{4}$ and $\eta := \frac{1}{L\sqrt{2(1+\omega)}} = \frac{\sqrt{2}}{\sqrt{17}L}$.
Then, the last two conditions of \eqref{eq:cPEAG_proof4} become $L\rho \leq \frac{\omega - 1}{9\sqrt{2(1+\omega)}}  = \frac{1}{2\sqrt{34}}$ and $L\rho < \frac{1}{4\sqrt{2(1+\omega)}} = \frac{1}{2\sqrt{34}}$, respectively.
Therefore,  if $2\sqrt{34}L\rho < 1$, then all the  conditions of \eqref{eq:cPEAG_proof4} hold.


Since $\tau_k := \frac{1}{k+2}$, we can easily show that $b_k = b_0(k+1)$ for some $b_0 > 0$.
Moreover, since $0 \in Fx^{\star} + Tx^{\star}$, we have $\iprods{w^k, x^k - x^{\star}} \geq -\rho\norms{w^k}^2$.
Using this inequality, the definition \eqref{eq:cPEAG_potential_func} of $\hat{\Vc}_k$, Young's inequality, $b_k = b_0(k+1)$, and the choice of $a_k$, we can show that
\begin{equation*}
\arraycolsep=0.2em
\begin{array}{lcl}
\hat{\Vc}_k &\geq &  \left( a_k - b_k\rho - \frac{(\eta - 4\rho)b_k^2}{4b_0} \right) \norms{w^k}^2 - \frac{b_0}{\eta - 4\rho}\norms{x^0 - x^{\star}}^2 \vspace{1ex}\\
&\geq & \frac{b_0(\eta - 4\rho)(k+1)^2}{4} \norms{w^k}^2 - \frac{b_0}{\eta - 4\rho}\norms{x^0 - x^{\star}}^2.
\end{array}
\end{equation*}
Since $y^{-1} = x^0$, we have $\hat{\Vc}_0 = a_0\norms{w^0}^2 + b_0\iprods{w^0, x^0 - x^0} + c_0\norms{w^0 - \hat{w}^0}^2 = a_0\norms{w^0}^2 = b_0\left(\frac{\eta}{2} - \frac{\rho}{3}\right)\norms{Fx^0 + \xi^0}^2$.
Therefore, we get $\hat{\Vc}_k \leq \hat{\Vc}_0 = b_0 \left(\frac{\eta}{2} - \frac{\rho}{3}\right)\norms{Fx^0 + \xi^0}^2$ from \eqref{eq:cPEAG_proof3_b}.
Combining this  and the lower bound of $\hat{\Vc}_k$ above, we obtain
\begin{equation*}
\arraycolsep=0.2em
\begin{array}{lcl}
\norms{w^k}^2 & \leq &  \frac{1}{(k+1)^2}\left[ \frac{4}{(\eta - 4\rho)^2}\norms{x^0 - x^{\star}}^2 +  \frac{2(3\eta - 2\rho)}{3(\eta - 4\rho)}  \norms{Fx^0 + \xi^0}^2\right],
\end{array}
\end{equation*}
which is exactly \eqref{eq:cPEAG_convergence1} by using $w^k := Fx^k + \xi^k \in Fx^k + Tx^k$.
\revised{The last bound \eqref{eq:cPEAG_convergence1b} is a direct consequence of \eqref{eq:cPEAG_convergence1} and $\norms{\Gc_{\eta}x^k} \leq \norms{Fx^k + \xi^k}$ in Lemma~\ref{le:P_lowerbound}.}
\Eproof
\end{proof}

\beforesec
\section{Concluding Remarks}\label{sec:concl_remarks}
\aftersec
In this paper, we have developed two ``Nesterov's accelerated'' variants of the FBFS method and its past-FBFS scheme.
Our approach is different from Halpern's fixed-point iteration in, e.g.,  \cite{cai2022accelerated,cai2022baccelerated,lee2021fast,tran2021halpern,yoon2021accelerated}.
These two types of methods are though different, they are related to each other as discussed in \cite{tran2022connection}.
We have established $\BigO{1/k}$ last-iterate convergence rates of both new schemes using explicit parameter update rules and different Lyapunov functions.
We have also provided an alternative convergence rate analysis of the extra-anchored gradient-type methods in \cite{cai2022accelerated,cai2022baccelerated} for solving \eqref{eq:coMI}.
Our analysis exploits the proof techniques in \cite{lee2021fast,tran2021halpern,yoon2021accelerated} and our second scheme is also different from the one in \cite{cai2022baccelerated}.

Beyond the results of this paper, several research questions remain open to us.
We discuss some of them here. 
First, though both proposed algorithms theoretically achieve better convergence rates than the classical non-accelerated \eqref{eq:EG} and \eqref{eq:PEG} schemes, their practical performance remains unclear.
It is important to develop practical variants of these methods so that they can perform better or at least not worse than  \eqref{eq:EG} and \eqref{eq:PEG} on concrete applications.
One idea is to use adaptive techniques to update the algorithmic parameters without knowing $L$ and $\rho$.
Second, it is also interesting to establish $\SmallO{1/k}$ (small-o) convergence rates of the proposed methods by using different parameter update rules, at least for the monotone case.
In addition, the convergence  of iterate sequences and their convergence rates remains open.
Third, improving the range of $L\rho$ is also interesting, at least theoretically.
The classical methods  \eqref{eq:EG} and \eqref{eq:PEG} have tight bounds on the range of $L$ and $\rho$, while our conditions remain suboptimal and can be improved, especially for \eqref{eq:AcPEG}. 
Fourth, one can also extend our methods to solve $0 \in Fx + Hx + Tx$ and provide rigorous convergence rate analysis as we have done in this paper.
Finally, developing stochastic and randomized block coordinate variants for these methods to solve \eqref{eq:coMI} is also interesting and expected to make significant impact in machine learning applications and distributed systems.

{
\vspace{2ex}
\noindent\textbf{Data availability.}
The author confirms that this paper does not contain any data.
}

\vspace{2ex}
\noindent\textbf{Acknowledgements.}
The author is grateful to the anonymous reviewers for their helpful comments and suggestions.
This paper is based upon work partially supported by the National Science Foundation (NSF), grant no. NSF-RTG DMS-2134107 and the Office of Naval Research (ONR), grant No. N00014-20-1-2088 and No. N00014-23-1-2588.

\bibliographystyle{plain}

\end{document}